\def \ll{\mathfrak{s}}
\def \lie{\mathfrak{l}}
\def \subalg{\mathfrak{h}}
\def \m{\mathfrak{m}}
\newcommand{\Mlt}{\mathrm{Mlt} }
\newcommand{\PMlt}{\mathrm{PMlt} }
\newcommand{\Sab}{\mathrm{Sab} }
\newcommand{\spann}{\mathrm{span} }
\newcommand{\Q}{\mathbb{Q}}
\newcommand{\A}{\mathcal{A}}
\newcommand{\B}{\mathcal{B}}
\newcommand{\bigL}{\mathcal{L}}
\newcommand{\F}{\mathcal{F}}
\newcommand{\kk}{\boldsymbol{k}}
\newcommand{\Prim}{\mathrm{Prim}\, }
\newtheorem{theorem}{Theorem}
\newtheorem{proposition}[theorem]{Proposition}
\newtheorem{lemma}[theorem]{Lemma}
\newtheorem{corollary}[theorem]{Corollary}
\theoremstyle{remark}
\newtheorem*{remark}{Remark}
\font\cyrillic=wncyi10
\newcommand{\SU}{\mathop{\hbox{{\cyrillic UX}}}}
\title{Nilpotent Sabinin algebras}
\author{J.~Mostovoy, J.M.~Perez-Izquierdo, I.P.~Shestakov}
\begin{document}

\maketitle

\begin{abstract} In this note we establish several basic properties of nilpotent Sabinin algebras. Namely, we show that nilpotent Sabinin algebras (1) can be integrated to produce nilpotent loops, (2) satisfy an analogue of the Ado theorem, (3) have nilpotent Lie envelopes. We also give a new set of axioms for Sabinin algebras. These axioms reflect the fact that a complementary subspace to  a Lie subalgebra in a Lie algebra is a Sabinin algebra. Finally, we note that the non-associative version of the Jennings theorem produces a version of Ado  theorem for loops whose commutator-associator filtration is of finite length.
\end{abstract}

\section{Introduction} 

Lie theory for non-associative products has the same broad outlines as the usual Lie theory. Roughly speaking, the triangle consisting of Lie groups, Lie algebras and Hopf algebras is replaced in the non-associative setting with the triangle consisting of local loops, Sabinin algebras and non-associative Hopf algebras, see \cite{MP3} and \cite{MPSh} for the details.

There are several features of the infinitesimal theory of Lie groups that do not extend to the general non-associative case. While  finite-dimensional Lie algebras can always be integrated to Lie groups, for general Sabinin algebras only a local integration procedure is available. There is no reason to expect that this local procedure can be globalized since many natural examples of non-associative products are of local nature. Another such feature is the existence of finite-dimensional enveloping associative algebras for finite-dimensional Lie algebras (the Ado theorem). There are finite-dimensional Sabinin algebras (simple Lie triple systems, for instance) which do not admit finite-dimensional envelopes, see \cite{MP2, PI3}.  Note that these questions are related: one of the two known proofs of the global integrability for finite-dimensional Lie algebras uses the Ado theorem in an essential way.

Still, several classes of Sabinin algebras are similar to the Lie algebras in their integrability properties.  In particular, each finite-dimensional Malcev algebra is a tangent algebra of a unique simply-connected global Moufang loop \cite{Kerd, Na}.  Moreover,  there is a version of the Ado theorem for Malcev algebras \cite{ShPI1}. The main purpose of the present note is to point out that that the same holds for  finite-dimensional nilpotent Sabinin algebras: they can be integrated to globally defined nilpotent loops and can be embedded into finite-dimensional non-associative algebras in a way consistent with their operations. Note that, in contrast to Lie and Malcev algebras, the structure of a nilpotent Sabinin algebra involves, in general, more than one operation, although the number of these operations is necessarily finite.

Each Sabinin algebra can be realized as a subspace in a Lie algebra, called the Lie envelope of the Sabinin algebra. We give a new set of axioms for Sabinin algebras 
which reflect this fact and discuss the construction of Lie envelopes in general. We also show that the nilpotency of a Sabinin algebra is in direct relation with the existence of a nilpotent Lie envelope. 

We finish this note by observing that for any torsion-free nilpotent loop $L$ there is  a non-associative algebra whose invertible elements form a loop containing $L$. This is a direct consequence of the non-associative generalization of the Jennings theorem \cite{M2}. This statement, whose associative prototype can be found in  \cite[Proposition 3.6]{Quillen}, is a non-linear version of the Ado theorem. In a way, it is more fundamental than the Ado theorem for Sabinin algebras since it requires no knowledge of the primitive operations.

\subsection*{On the notation and terminology} 

We shall say that a loop is {\em nilpotent} if its commutator-associator filtration, as defined in \cite{M1}, has finite length. This notion of nilpotency is different from nilpotency in the sense of Bruck~\cite{Bruck}. While nilpotent loops as defined here are nilpotent in the sense of Bruck, the converse is not necessarily true. The use of non-standard terminology is amply justified by the fact that many basic results from the theory of nilpotent groups extend to the non-associative setting if nilpotency is understood in terms of the commutator-associator filtration rather than Bruck's lower central series. For details see \cite{M1, M2, MP1}.

\medskip

We refer to \cite{MS2, PI2, MS1} for the definition and basic properties of Sabinin algebras. Here we shall only mention that  a Sabinin algebra is a vector space with two infinite sets of multilinear operations on it. The first set of operations are the {\em Mikheev-Sabinin brackets} $\langle x_1, \ldots, x_n ; y, z\rangle$ which are defined for each $n\geq 0$. Informally, they are abstract versions of the covariant derivatives $\nabla_{x_1}\ldots\nabla_{x_n} T(y,z)$ of the torsion tensor of a flat affine connection, namely, the canonical connection of a loop. Mikheev-Sabinin brackets satisfy certain identities which can be interpreted as the Bianchi identities and their derivatives. We shall not need their precise form here. 

The second set of operations, $\Phi(x_1, \ldots, x_n; y_1, \ldots, y_m)$, are defined for all $n\geq 1, m\geq 2$ and are known collectively as the {\em multioperator}. The only kind of identities they satisfy is the total symmetry in each of the two groups of variables. The multioperator has its origin in the Taylor series of the function that measures the failure of a loop to be right alternative. We stress that the two sets of operations are completely independent of each other. In particular, for each Sabinin algebra there exists a Sabinin algebra with the same Mikheev-Sabinin brackets and the trivial multioperator. We shall call Sabinin algebras with the trivial multioperator {\em flat} Sabinin algebras. 

In general, by a {\em bracket} in a Sabinin algebra we shall understand a multilinear operation which can be obtained from Mikheev-Sabinin brackets and  the multioperator by composition. The {\em weight} of a bracket is the number of its arguments. A Sabinin algebra  is {\em nilpotent of class $n$} if $n$ is the smallest integer such that all the brackets of weight  $n+1$ and greater are identically zero in it. Explicitly, given a Sabinin algebra $\ll$,
define $\ll_1=\ll$ and let $\ll_n$ to be the  subspace of $\ll$ spanned by 
\begin{itemize}
\item
$\langle \ll_{i_1}, \ldots, \ll_{i_l}; \ll_i, \ll_j\rangle$ with $i_1+\ldots+i_l+i+j\geq n$;
\item
$\Phi(\ll_{i_1}, \ldots, \ll_{i_p}; \ll_{j_1}, \ldots, \ll_{j_q})$ with $i_1+\ldots+i_p+j_1+\ldots+j_q\geq n$.
\end{itemize}
Then $\ll$ is nilpotent of class $n$ if $\ll_n\neq \ll_{n+1}=0$.

\medskip

Finally, recall that for each Sabinin algebra $\ll$ there exists a universal enveloping algebra $U(\ll)$ which is a cocommutative and coassociative but not necessarily associative Hopf algebra into which $\ll$ is embedded as the space of primitive elements. The operations on $\ll$ are induced by certain algebraic operations in $U(\ll)$ in the same manner as the Lie bracket in a Lie algebra is induced by the commutator in its universal enveloping algebra. Denote by $\underline{u}$ the sequence $x_1, \ldots, x_n$ of elements of $\ll$ and let $u$ be the left-normed product $((x_1x_2)\ldots)x_n$. Similarly, write  $\underline{v}$ for the sequence $y_1,\ldots, y_m$ and $v$ for the corresponding product. The Shestakov-Umirbaev primitive operations $p(\underline{u}; \underline{v}; z)$ are defined inductively by the identity
\begin{equation}\label{defShU}
(uv)z-u(vz)= u_{(1)}v_{(1)}\cdot p(\underline{u}_{(2)}; \underline{v}_{(2)}; z),
\end{equation}
where the Sweedler notation is used. Each $p(\underline{u}; \underline{v}; z)$ induces a bracket of weight $m+n+1$ on $\ll$. 
The Mikheev-Sabinin brackets on $\ll$ can be obtained as

\begin{equation}
\begin{gathered}\label{defMS}
\langle y, z\rangle= -yz+zy,\\
\langle x_1, \ldots, x_n ; y, z\rangle= -p(x_1, \ldots, x_n; y ; z) + p(x_1, \ldots x_n; z ; y)
\end{gathered}
\end{equation}
and the multioperator as 
$$
\Phi(x_1, \ldots, x_m; y_1, \ldots, y_n)\\
=\frac{1}{m!}\frac{1}{n!}\sum_{\sigma\in S_m, \tau\in S_n} p(x_{\sigma(1)}, \ldots, x_{\sigma(m)};
y_{\tau(1)}, \ldots, y_{\tau(n-1)}; y_{\tau(n)}),
$$
where $S_k$ is the symmetric group on $k$ letters.
With these definitions any algebra becomes a Sabinin algebra;  we denote the Sabinin algebra structure on an algebra $A$ by $\SU(A)$.

\subsection*{Acknowledgments}
The authors were supported by the following grants:  FAPESP 2012/21938-4 and CONACyT grant 168093-F (J.M.), FAPESP 2012/22537-3 (J.M.P.-I.), CNPq 3305344/2009-9 and FAPESP 2010/50347-9 (I.P.Sh.), Spanish Government project MTM 2010-18370-C04-03 (all three). The first two authors would like to thank the Institute of Mathematics and Statistics of the University of S\~ao Paulo for hospitality.

\section{Formal loops on filtered vector spaces}
Let $V=V_1\supseteq V_2 \supseteq\ldots $ be a filtered vector space. Let us say that a multilinear operation $q: V^{\otimes k}\to V$ respects the filtration on $V$ if $q(x_1, \ldots, x_k)\in V_m$ whenever $x_i\in V_{j_i}$ with $j_1+\ldots+j_k=m$. Given such an operation $q$, a generalized monomial in $x$ and $y$ is the map $V\otimes V\to V$ obtained by substituting two variables $x,y$ as arguments into $q$. Similarly, one defines generalized monomials in more than two variables.
A generalized monomial is of weight $n$ if its image is contained in $V_n$ but not in $V_{n+1}$.  The weight of a monomial is at least as big as the number of arguments of the corresponding multilinear operation, that is, its degree. 

Let $F$ be a unital formal multiplication on $V$ 
$$F (x,y)=x+y+\sum_{n, \alpha} P_{n, \alpha} (x, y),$$
where each $P_{n, \alpha}$ is a generalized monomial of weight $n>1$ which involves both arguments, and $\alpha=\alpha(n)$ varies over some index set  which is finite for each $n$.

\medskip

\begin{theorem}~\label{the}
Assume that $\bigcap_i V_i=0$. Then $V$ is a loop with the product $F$ and the subspaces $V_i$ form an $N$-sequence. 
\end{theorem}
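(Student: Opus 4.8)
The plan is to verify the loop axioms directly and then to check the commutator–associator inclusions defining an $N$-sequence, exploiting throughout that each $P_{n,\alpha}$ strictly raises filtration degree. First I would record that $0$ is a two-sided unit: since every $P_{n,\alpha}$ is multilinear and involves both arguments, $P_{n,\alpha}(x,0)=P_{n,\alpha}(0,x)=0$, so $F(x,0)=F(0,x)=x$. It then remains to solve the division equations $F(a,x)=b$ and $F(y,a)=b$ uniquely for all $a,b\in V$.

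For right division I would rewrite $F(a,x)=b$ as a fixed-point problem $x=G(x)$ with $G(x)=(b-a)-\sum_{n,\alpha}P_{n,\alpha}(a,x)$. The crucial estimate is that $G$ contracts the filtration: if $x\equiv x'\pmod{V_m}$, then telescoping each monomial $P_{n,\alpha}(a,\cdot)$ one second-argument slot at a time, and using $a\in V_1$ together with $\deg P_{n,\alpha}\ge 2$, places every resulting term in $V_{m+1}$, so $G(x)\equiv G(x')\pmod{V_{m+1}}$. Consequently $G$ has a unique fixed point: in each quotient $V/V_m$ only finitely many $P_{n,\alpha}$ survive and the successive approximations $x_0=b-a$, $x_{k+1}=G(x_k)$ stabilise, giving compatible solutions that assemble to a solution in $V$ in the filtration topology. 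Uniqueness is exactly where $\bigcap_i V_i=0$ is used: two solutions satisfy $x-x'=G(x)-G(x')$, hence $x-x'\in V_m$ for every $m$ by induction, so $x-x'\in\bigcap_i V_i=0$. Left division is identical.

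For the $N$-sequence I would first check that each $V_i$ is a normal subloop: $F$ descends to $V/V_i$ because the tail $\sum_{n\ge i}P_{n,\alpha}$ lands in $V_i$, so the projection is a loop homomorphism with kernel $V_i$. It then remains to establish the inclusions $[V_i,V_j]\subseteq V_{i+j}$ and $[V_i,V_j,V_k]\subseteq V_{i+j+k}$ for the loop commutator and associator. I would route these through the defect maps $C(x,y)=F(x,y)-F(y,x)$ and $D(x,y,z)=F(F(x,y),z)-F(x,F(y,z))$: once $C$ and $D$ are shown to lie in the expected layer, the genuine commutator and associator (defined by right division against $F$) inherit the same membership by the contraction argument of the previous paragraph, and the corresponding subloops, being normal closures of such elements inside the normal subloops $V_{i+j}$ and $V_{i+j+k}$, remain there. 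For $C$ the unit law gives $C(x,0)=C(0,y)=0$, so every monomial of $C$ involves both variables and hence $C(x,y)\in V_{i+j}$ whenever $x\in V_i$, $y\in V_j$.

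The step I expect to be the main obstacle is the associator estimate, because the crude weight bound fails: $D$ a priori contains monomials omitting one variable, and a monomial in, say, only $y$ and $z$ lies only in $V_{j+k}$. The remedy is to show all such low-weight monomials vanish. Evaluating with one argument equal to $0$ and invoking the unit law yields $D(x,y,0)=D(x,0,z)=D(0,y,z)=0$ identically; setting two arguments to $0$ kills the one-variable parts as well, so $D$ vanishes whenever any argument is $0$. This forces every monomial of $D$ to involve all three of $x,y,z$, whence each has weight at least $i+j+k$ and $D(x,y,z)\in V_{i+j+k}$. The only remaining care is bookkeeping: confirming that passing from the defects $C,D$ to the actual commutator and associator via division does not lower filtration degree, which is guaranteed once more by the contraction estimate.
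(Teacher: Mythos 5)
The loop-structure half of your argument is sound and is essentially the paper's own: your fixed-point iteration for $F(a,x)=b$ is the paper's nested division series written as successive approximations (with $\bigcap_i V_i=0$ used for uniqueness in both cases), and your evaluation-at-the-unit argument for the commutator and associator defects reproduces the paper's observation that in $F(x,F(y,z))$ and $F(F(x,y),z)$ all monomials omitting one of the three arguments coincide, so that dividing one by the other leaves only monomials involving every argument.

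The genuine gap is your reduction of the $N$-sequence condition to the two inclusions $[V_i,V_j]\subseteq V_{i+j}$ and $(V_i,V_j,V_k)\subseteq V_{i+j+k}$. For loops that is not the definition. An $N$-sequence in the sense of \cite{M2} (the reference the paper gives for this notion) must in addition be compatible with all the \emph{associator deviations}: the operations in $3+l$ arguments that measure, recursively, the failure of the associator (level $1$), and of each level-$l$ deviation, to be multiplicative in each of its slots, with the required inclusion being that a deviation with arguments in $V_{i_1},\dots,V_{i_{3+l}}$ lands in $V_{i_1+\cdots+i_{3+l}}$. In a group all deviations are trivial and your two inclusions would suffice, but not here; indeed the larger part of the paper's proof is devoted precisely to this point. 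The paper argues by induction on the level: if $\phi(x_1,\dots,x_m)$ is any linear combination of generalized monomials each involving all $m$ arguments (the associator being the base case), then $r=\phi(x_1,\dots,F(x_j,x_{j+1}),\dots,x_{m+1})$ and $s=F\bigl(\phi(\dots,x_j,\dots),\phi(\dots,x_{j+1},\dots)\bigr)$ agree in every monomial that omits some argument, so the left division of $r$ by $s$ --- which is exactly the next-level deviation --- is again a combination of monomials involving all $m+1$ arguments, hence respects the filtration. Your vanishing-at-zero trick could in fact supply this inductive step (every deviation vanishes when an argument is $0$, since then $r=s$), but as written your proposal neither states the correct definition of an $N$-sequence nor runs any such induction, so it proves strictly less than the theorem asserts.
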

For the definition of $N$-sequences in the non-associative context we refer to  \cite{M2}.

An immediate corollary is that $V$ is residually nilpotent. Moreover, if $V_{n+1}=0$ for some $n$, then $V$ is nilpotent of class at most $n$.
Note that it is not necessarily true that the $N$-sequence $V_i$ coincides with the commutator-associator filtration on $V$.
\begin{proof}
Write the formal loop $F$ as
$$F(x,y)=x+y+H(x,y).$$
Let $D(x,y)$ be the infinite expression
$$D(x,y)= -x+y-H(x, -x+y-H(x, -x+y-H(x, -x+y-\ldots ))).$$ 
Since $H(x,y)$ is a linear combination of generalized monomials in $x$ and $y$, the expression $D(x,y)$ is, actually, itself an infinite linear combination of generalized monomials, with a finite number of monomials of each weight. Moreover, it is easy to see that
$$F(x, D(x,y))=y.$$
Similarly, there exists an infinite linear combination 
$$D'(x,y)= x-y-H(x-y-H(x-y-H(x-y-\ldots ,y),y),y).$$ 
such that 
$$F(D'(x,y), y)= x.$$

The formal series $F$ converges on $V$ since modulo each $V_k$ it is a finite sum and $\bigcap_i V_i=0$. The product it defines is a loop: the left and right divisions are given by $D$ and $D'$ respectively, and the unit is $0\in V$.  We denote this loop by $V^F$.

The subspaces  $V_k$ with this product become subloops $V_k^F\subset V^F$. We need to show that these subloops form an $N$-sequence on $V^F$. For this it is sufficient to establish that the commutators, associators and associator deviations can be written in $V^F$  as linear combinations of generalized monomials which involve each of the arguments of the corresponding loop operation at least once.

The loop associator $(x,y,z)^F$ can be written as 
\begin{equation}\label{associator}
(x,y,z)^F = D(F(x, F(y,z)), F(F(x,y),z)).
\end{equation}
We have  
\begin{multline*}
F(x, F(y,z))=x+y+z+H(x,z)+H(x, y+z+ H(y,z))\\
=x+y+z+H(x,z)+H(x,y)+H(y,z)+\text{monomials involving } x,y \text{ and } z,
\end{multline*}
and the same expression for $F(F(x,y),z)$, though, of course, the linear combinations of monomials involving $x,y$ and $z$ will be different. Using the expression for $D$ we see that $(x,y,z)^F$ is a combination of monomials which involve all the arguments.

Now, let $\phi$ be a formal linear combination of generalized monomials in $m$ different arguments $x_1,\ldots, x_m$ with the property that each monomial involves all the $x_i$. Such $\phi$ gives rise to a map $V^m\to V$; as we have just seen, the loop associator is an example of such a map. Fix $j$ between 1 and $m$. Comparing the two expressions 
$$r=\phi(x_1,\ldots,x_{j-1}, F( x_j, x_{j+1} ), x_{j+2},\ldots, x_m)$$
and
$$s=F( \phi(x_1,\ldots,x_{j-1}, x_j, x_{j+2},\ldots, x_m),   \phi(x_1,\ldots,x_{j-1}, x_{j+1}, x_{j+2},\ldots, x_m) )$$
we see that the terms that do not contain at least one of the $x_i$ for $i=1,\ldots, m+1$ coincide in both of them. This implies that $D(s, r)$ consists of brackets which involve each of the arguments $x_1, \ldots, x_{m+1}$ at least once. Note that taking $\phi$ to be the expression (\ref{associator}) for the associator, we obtain in this way all associator deviations. Therefore, we have proved that the associator and all the associator deviations respect the $V_k^F$. The argument for the commutator is entirely similar.
\end{proof}

There are several natural examples of loops that fall within the premises of Theorem~\ref{the}.
\subsection*{Power series under composition}
Let $A$ be an associative unital algebra and $B(A)$ be the set of power series in one variable $x$ with coefficients in $A$ which have the form
$$a(x) = x+a_1 x^2 + a_2 x^3 +\ldots $$
Assuming that $x$ commutes and associates with the elements of $A$, one defines the composition of $a, b\in B(A)$ as the result of substitution of $b$ into $a$ instead of $x$. This endows  $B(A)$ with the structure of a loop, which is not associative unless $A$ is commutative. The $n$th coefficient of the composition $a(x) \circ b(x)$ is readily seen to be
$$\sum_{\begin{array}{c} m+i_1+\ldots+i_k=n,\\ k\leq m+1\end{array}} a_m b_{i_1}\ldots b_{i_k},$$
where $a_0=b_0=1$.
 
The space $B(A)$ is naturally filtered by
$$B(A)_i=\{x+a_i x^{i+1}+ a_{i+1} x^{i+2}+\ldots\}.$$
The map that truncates a power series after the first non-trivial coefficient respects the filtration. Since the product in $A$ is bilinear, and  we see that Theorem~\ref{the} can be applied to $B(A)$; in particular, $B(A)$ is residually nilpotent. 

The graded Sabinin algebra associated with the filtration by the degree on $B(A)$ is the sum of a copy of $A$ in each degree. The low-degree operations can be calculated explicitly. The commutator induces the bilinear bracket
$$[a_i, b_j] = i\cdot a_i b_j - j\cdot b_j a_i,$$
and the associator gives the trilinear operation
$$(a_i, b_j, c_k) = \frac{i(i+1)}{2}\cdot a_i (b_jc_k-c_k b_j).$$
Here, of course, $\deg [a_i, b_j] =i+j$ and $\deg (a_i, b_j, c_k) =i+j+k$.
\begin{remark}
In the case when $A$ is commutative, the loop $B(A)$ is a group. Groups of this form have been studied in some detail; see, for instance \cite{Babenko, BFK, Camina, Jennings}. Especially interesting is the case $A=\mathbb{F}_p$ which gives the so-called Nottingham group.
\end{remark}

\subsection*{Non-associative power series}
Let $A$ be a unital algebra and $C(A)$ be the set of power series in one non-associative variable $x$ with coefficients in $A$ which have the form
$$a(x) = 1+a_x x + a_{x^2} x^2 + a_{x^2 x} x^2 x +a_{x x^2} x x^2 +\ldots $$
Assume that $x$ commutes and associates with the elements of $A$; then $C(A)$ is a loop with respect to the multiplication of the power series. The loop $C(A)$ is non-associative even when $A$ is commutative. The loop $C(A)$ can be filtered by setting 
$$C(A)_i=1+\sum_{\deg \tau \geq i} a_\tau \tau,$$
where $\tau$ are non-associative monomials in $x$. Again, since the product in $A$ is bilinear, the premises of Theorem~\ref{the} are satisfied.

When $A=\Q$ the Sabinin algebra associated with this filtration is the Shestakov-Umirbaev Sabinin algebra of the free non-associative algebra on one generator.

\begin{remark}
The loop $C(\Q)$ contains an interesting subloop, which consists of all series of exponential type: these are the power series whose composition with any primitive element in the Hopf algebra of non-associative power series with rational coefficients is group-like. The corresponding Sabinin algebra, with respect to the filtration by the degree, is free since the variety of Sabinin algebras is Schreier \cite{Chi}.
\end{remark}

\section{Integration of nilpotent Sabinin algebras}
Here we shall prove that each nilpotent Sabinin algebra is a tangent algebra of a unique nilpotent simply-connected loop of the same class. 
The construction that we use to establish this fact gives an equivalence of the category of  nilpotent Sabinin algebras of class $n$ and simply-connected nilpotent  loops of class $n$. 

\subsection{Primitive series}
Recall that the space of primitive elements in  the free algebra on $x$ and $y$ is the free Sabinin algebra on $x$ and $y$.
A {\em primitive series} is a (possibly infinite) linear combination of primitive elements in the free algebra on $x$ and $y$ which is of the form 
$$F(x,y)=x+y+\text{brackets that involve both } x \text{ and } y.$$
Theorem~\ref{the} shows that any primitive series defines a functor from nilpotent Sabinin algebras to nilpotent loops of, at most, the same class. We call this functor  an {\em integration} if it is inverse to taking the tangent algebra of a loop. In the case of Lie groups, such an integration is provided by the  Baker-Campbell-Hausdorff series.

\subsection{The Baker-Campbell-Hausdorff series}
Recall that  the exponential power series $\exp$ and the logarithmic power series $\log$ interchange the primitive and the group-like elements in the algebra of non-commutative  formal power series in one or several variables. As a consequence, the expression 
$$\mathcal{C}(x,y)=\log(\exp x \exp y)$$ in the algebra of non-commutative  formal power series in $x$ and $y$ can be written in terms of the iterated commutators in $x$ and $y$. In particular, given a nilpotent Lie bracket on a vector space, the expression $\mathcal{C}(\cdot, \cdot)$, with the commutators understood as Lie brackets, defines an associative product. Since $\mathcal{C}(\cdot,\cdot)$ is a primitive series, this product produces a nilpotent group, which can be shown to integrate the Lie algebra.

Similar series can also be defined in the non-associative situation. Indeed, group-like elements in a complete non-associative Hopf algebra form a loop. If $\exp$ is a non-associative power series in one variable that sends primitive elements bijectively onto the the group-like elements in the algebra of non-associative power series in one variable, the same reasoning as before can be used to show that  the expression $\log(\exp x \exp y)$ can be written as an infinite linear combination of primitive operations in $x$ and $y$ and, hence, gives a primitive series. A primitive series of this type was studied in \cite{GH}.

It is not immediately clear, however, that the primitive series defined in this way gives an integration functor.  An interesting point is that the power series $\exp$ and $\log$ are not uniquely determined by the condition that they interchange primitive and group-like elements \cite{GH}. Our strategy will be to use a different primitive series whose definition is based on the geometry of the Mikheev-Sabinin brackets.

\medskip

Let $\overline{\A}$ be the augmentation ideal of the algebra $\A$ of non-associative power series in several variables  $x_1, \ldots, x_n$. 
Then $$\F=1+\overline{\A}$$ is an infinite-dimensional loop whose tangent space $T_a$ at a point $a$ can be identified with $a+\overline{\A}$. On the tangent bundle to $\F$ there exists a canonical flat connection $\nabla$ whose associated parallel transport sends the $1+x \in T_1$ to $a+ax\in T_a$. It follows that the parallel transport from $b$ to $a$ sends $b+x\in T_b$ to $a+a(b\backslash x)\in T_a$.

The geodesics and the exponential map for $\nabla$ can be calculated explicitly  \cite[Appendix]{MP3}. For the exponential map at $1$ we have
$$\exp x = 1+x+\frac{x^2}{2!}+\frac{x^2x}{3!}+\frac{(x^2x)x}{4!}+\ldots$$
for all $1+x\in T_1$. There are also explicit expressions for the coefficients of the corresponding logarithm, but we shall not need them. Note that $\exp$ sends the space of the primitive elements in $\A$ to the subloop of the group-like elements in $\F$. The exponential map of $\nabla$ at an arbitrary $b\in \F$ is given by
$$\exp_a x = a+ x+\frac{x(a\backslash x)}{2!}+\frac{(x(a \backslash x))(a \backslash x)}{3!}+\frac{((x(a \backslash x))(a \backslash x))(a \backslash x)}{4!}+\ldots.$$

Recall that with each (local) loop one can associate a (local) right alternative loop, namely, the geodesic loop of the canonical connection. 
In the present context, the formula for the right alternative product $\times$ on the loop $\F$ is 
$$a\times b = \exp_a( a\log b);$$
this product also defines a global loop on $\F$. If $a=\exp x$ and $b =\exp y$ it is easy to see that
$$a\times b = \sum_{i,j=0}^{\infty}\frac{x^i y^j}{i!j!}$$
where in each term $x^i y^j$ the parentheses are assumed to be left-normed. 

The subloop of all the group-like elements in $\F$ remains a subloop under the right alternative product. This is is immediate from the definition of the canonical connection, but is also clear form the above expression for $a\times b$. It follows that $\log (\exp x\times \exp y)$ can be expressed as a series consisting of compositions of the Mikheev-Sabinin operations $\langle \cdot, \ldots, \cdot ; \cdot, \cdot\rangle$ in the arguments $x$ and $y$. This is what we call the {\em right alternative Baker-Campbell-Hausdorff series}. 

\subsection{The Mikheev-Sabinin brackets}
In order to show that the right alternative Baker-Campbell-Hausdorff  series gives an integration of a flat nilpotent Sabinin algebra we do not need any explicit formula for this series. It is sufficient to observe that, by construction, the canonical connection of the Baker-Campbell-Hausdorff  product $\times$ is the same as that of the loop $\F$. 

\begin{proposition}
Let $\ll$ be a flat nilpotent Sabinin algebra of class $n$. The right alternative BCH series defines a class $n$ nilpotent right alternative loop on $\ll$, whose tangent Sabinin algebra  coincides with $\ll$.
\end{proposition}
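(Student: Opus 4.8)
The plan is to establish three claims about the loop $(\ll,\times)$ obtained by substituting the Mikheev-Sabinin brackets of $\ll$ into the right alternative BCH series $\mathcal{C}(x,y)=\log(\exp x\times\exp y)$: that it is right alternative, that its tangent Sabinin algebra is $\ll$, and that its class equals $n$. That $(\ll,\times)$ is a nilpotent loop of class at most $n$ is immediate from Theorem~\ref{the}, since $\mathcal{C}$ is a primitive series and $\ll$ is nilpotent of class $n$; moreover, because the brackets of weight greater than $n$ vanish in $\ll$, the series $\mathcal{C}$ and all the expressions built from it are finite sums, so no convergence issue arises. The remaining assertions are where the work lies.

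First I would prove right alternativity by showing that the identity $(x\times y)\times y=x\times(y\times y)$ already holds in the free Sabinin algebra on $x$ and $y$. On the subloop of group-like elements of $\F$ the product $\times$ is right alternative, being the restriction of the geodesic loop of the canonical connection $\nabla$; passing through $\log$ and $\exp$, this yields the relation $\mathcal{C}(\mathcal{C}(x,y),y)=\mathcal{C}(x,\mathcal{C}(y,y))$ among group-like elements and hence among the primitive generators $x,y$. Since $x$ and $y$ are free generators and $\mathcal{C}$ is built only from Mikheev-Sabinin brackets, this relation is an identity of the variety of Sabinin algebras and therefore holds in $\ll$. As $(\ll,\times)$ is then right alternative, its multioperator vanishes, so its tangent Sabinin algebra is automatically flat and only its Mikheev-Sabinin brackets remain to be identified.

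The heart of the argument is to match these brackets with those of $\ll$. Here I would use the key observation recorded above that, by construction, the canonical connection of the product $\times$ coincides with the flat connection $\nabla$, whose torsion and its iterated covariant derivatives are exactly the Mikheev-Sabinin operations. Since both the series $\mathcal{C}$ and the passage to the tangent algebra are given by universal expressions in the primitive operations, their composite is a universal endofunctor of flat nilpotent Sabinin algebras, and it suffices to prove that it is the identity. I would do this by a filtration argument. Using the lower central filtration $\ll=\ll_1\supseteq\ll_2\supseteq\cdots\supseteq\ll_{n+1}=0$, which by Theorem~\ref{the} underlies an $N$-sequence on $(\ll,\times)$, I would check that on the associated graded the leading term of each Mikheev-Sabinin bracket produced by $\mathcal{C}$ reproduces the corresponding bracket of $\ll$, all corrections having strictly higher weight; a downward induction over the filtration, terminating because $\ll_{n+1}=0$, then upgrades this graded agreement to equality of the brackets themselves. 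This gives that the tangent Sabinin algebra of $(\ll,\times)$ is $\ll$.

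The class is then exactly $n$: the tangent algebra of a loop of class $m$ has nilpotency class at most $m$, so the identification of $\ll$ as the tangent algebra forces $n\le m$, while Theorem~\ref{the} gives $m\le n$, whence $m=n$. The step I expect to be the main obstacle is precisely the identification of the Mikheev-Sabinin brackets in the previous paragraph, that is, proving that integration and passage to the tangent algebra are mutually inverse. The geometric input that the canonical connection of $\times$ equals $\nabla$ is what forces the leading-order terms to agree, but turning this into an exact equality of all the brackets, uniformly over all flat nilpotent Sabinin algebras, requires the careful filtration bookkeeping sketched above.
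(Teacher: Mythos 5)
Your reduction to the universal case and your right-alternativity argument are both sound; the latter is in fact a nice explicit supplement, since the paper treats right alternativity as built into the construction of the geodesic loop, whose multioperator vanishes exactly because the multioperator measures the failure of right alternativity. The genuine gap is in the step you yourself flag as the main obstacle: the identification of the Mikheev-Sabinin brackets. You treat the coincidence of the canonical connection of $\times$ with $\nabla$ as giving only \emph{leading-order} agreement of the brackets, to be upgraded by a filtration induction. That induction cannot work, because there is nothing for it to bite on. The tangent brackets of any loop are multilinear, so any putative correction to $\langle x_1,\ldots,x_k;y,z\rangle^{\times}$ is a linear combination of compositions of brackets of $\ll$ in which each of the $k+2$ arguments occurs exactly once; every such composition has weight exactly $k+2$ and the same behaviour with respect to the lower central filtration as the leading term, namely it maps $\ll_{j_1}\times\cdots\times\ll_{j_{k+2}}$ into $\ll_{j_1+\cdots+j_{k+2}}$ and, generically, no deeper. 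For instance, in a flat algebra of class $3$ the trilinear tangent bracket could a priori be any combination of $\langle x;y,z\rangle$, $\langle y;x,z\rangle$, $\langle z;x,y\rangle$, $\langle\langle x,y\rangle,z\rangle$, $\langle\langle x,z\rangle,y\rangle$, $\langle\langle y,z\rangle,x\rangle$, and these are indistinguishable on the associated graded. Saying the corrections ``have strictly higher weight'' is, for multilinear expressions, the same as saying they vanish; so your graded agreement is not a weaker statement from which the full statement can be bootstrapped --- it \emph{is} the full statement, and the induction is circular.

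The repair is to use the geometric input at full strength rather than only to leading order, which is what the paper does. The Mikheev-Sabinin brackets of the tangent algebra of a loop are, by their very definition, the torsion tensor of its canonical connection together with the iterated covariant derivatives of that torsion. Since the canonical connection of the BCH product $\times$ on $\F$ is $\nabla$ itself, the tangent brackets of $\times$ on $\Prim\A$ coincide \emph{exactly} --- not merely modulo deeper filtration terms --- with the brackets of $\Prim\A$, which agree with the Shestakov-Umirbaev ones by \cite[Theorem 6.1]{MP3}. With this exact equality in the free case, your own functoriality step (a homomorphism $\Prim\A\to\ll$ induces a homomorphism of the BCH loops, and the tangent algebra, the canonical connection, the torsion and its derivatives are all natural) finishes the proof with no filtration bookkeeping at all; the statement about the class then follows exactly as you argue in your last paragraph.
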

\begin{proof}
Let $\Prim{\A}$ be the subspace of primitive elements in $\A$. It is the completion of the free Sabinin algebra on $x_1, \ldots, x_n$ with the operations coming form the primitive operations of Shestakov and Umirbaev. This Sabinin algebra structure agrees with the one it acquires as a subspace of the tangent space to $\F$ (see, for instance, \cite[Theorem 6.1]{MP3}). Since both the original product and the BCH product on $\F$ share the same canonical connection, the Mikheev-Sabinin brackets of the Sabinin algebra tangent to the BCH product on $\Prim{\A}$ coincide with the corresponding brackets in $\Prim {\A}$.

Given a nilpotent Sabinin algebra $\ll$, a  homomorphism 
$$\Prim{\A}\to\ll$$ induces a homomorphism of the corresponding BCH loops. The construction of the tangent Sabinin algebra of a loop is functorial. The same is true for the canonical connection and the corresponding torsion tensor and its derivatives. As a consequence, the tangent Sabinin algebra of a BCH loop for a nilpotent Sabinin algebra $\ll$ coincides with $\ll$.
\end{proof}

It may be instructive to calculate the Mikheev-Sabinin brackets for the BCH product on  $\Prim\A$ explicitly.  
First, note that if a vector field
$$a\mapsto x(a),\quad a\in\F$$ is parallel with respect to the connection $\nabla$ and $x(1)=x$, we have that $x(a)=ax$.
The Lie bracket of two parallel vector fields is calculated as
$$[ay, az]=(ay)z-(az)y.$$
Indeed, let $f_\mu$ be the function on $\F$ that assigns to a power series the coefficient of the monomial $\mu$ in it.
In order to define a vector field on $\F$ it is sufficient to specify its action on the functions of the form $f_\mu$ for all $\mu$. We have that $$az(f_\mu)=f_\mu(az)$$ and $$ay(f_\mu(az))=f_\mu((ay)z).$$
Now, by the definition of the torsion tensor we have that at a point $a$ $$T(ay, az)=-[ay, az]=(az)y-(ay)z.$$
For all $n\geq 1$ set 
$$P(x_1, \ldots, x_n; y, z)=\nabla_{ax_1}\ldots\nabla_{ax_n} ((ay)z).$$ 
Let
$$u=((x_1x_2)\ldots)x_n,$$
$$^au=(((ax_1)x_2)\ldots)x_n,$$
and write $\underline{u}={^a\underline{u}}$ for the sequence $x_1, \ldots, x_n$. 

From the definition of the covariant derivative, and using induction, we get the following identity
\begin{equation}\label{nonlinearshu}
(^au y) z-{^au}\cdot  a\backslash (ay\cdot z)=  {^au}_{(1)}\cdot a\backslash P( ^a\underline{u}_{(2)}; y,z).
\end{equation}
For instance, for $n=1$ it reads
$$(ax\cdot y) x- ax \cdot a\backslash (ay\cdot z)= P(x;y,z).$$
When $a=1$ formula (\ref{nonlinearshu}) becomes the identity defining the Shestakov-Umirbaev operations $p(\underline{u};y;z)$, and we see that the Mikheev-Sabinin brackets on the primitive elements of $\A$ defined with the help of the BCH product coincide with the brackets induced by the primitive operations  $p(\underline{u};y;z)$.

\subsection{The general Baker-Campbell-Hausdorff  series} The case of a non-trivial multioperator can be handled in the following manner. 
Let $\widehat{\Phi}(x,y)$ be the following element in the completion of the free Sabinin algebra on $x$ and $y$:
$$\widehat{\Phi}(x,y)=y+\Phi(x; y,y)+ \Phi(x; y,y,y)+\Phi(x,x; y,y)+\ldots.$$
Apart from the first term, this is just the sum of all the operations of the multioperator.
Let $\mathcal{C}(x, y)$ be the right alternative Baker-Campbell-Hausdorff  series.
\begin{proposition}
The expression $\mathcal{C}(x, \widehat{\Phi}(x,y))$ is a primitive series which defines on an arbitrary nilpotent Sabinin algebra the structure of the nilpotent loop that integrates it.
\end{proposition}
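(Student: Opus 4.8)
The plan is to reduce the statement to the completed free Sabinin algebra, where the desired loop is the one coming from the ordinary multiplication on $\F$, and then to identify the general Baker--Campbell--Hausdorff product with $\mathcal{C}(x,\widehat{\Phi}(x,y))$ by recognizing $\widehat{\Phi}$ as the correction that turns the right alternative companion $\times$ into the full product.

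First I would check that $\mathcal{C}(x,\widehat{\Phi}(x,y))$ is a primitive series. Since $\Phi$ is defined only for $n\geq 1,\ m\geq 2$, every term of the multioperator involves at least one $x$ and at least two $y$'s, so $\widehat{\Phi}(x,y)=y+(\text{brackets involving both }x\text{ and }y)$; composing with $\mathcal{C}(x,w)=x+w+(\text{brackets involving both }x\text{ and }w)$ yields a series whose linear part is $x+y$ and all of whose remaining terms are brackets containing both arguments. By Theorem~\ref{the} it therefore defines a functor from nilpotent Sabinin algebras to nilpotent loops. As the tangent-algebra construction is natural, it suffices, exactly as in the preceding proposition, to verify that this functor integrates the completed free Sabinin algebra $\Prim\A$; the general case then follows by transport along homomorphisms $\Prim\A\to\ll$.

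Next I would identify the target loop. On $\F$ the ordinary product restricts to the subloop of group-like elements, and $\exp$ identifies $\Prim\A$ with it; transporting by $\log$ produces the general Baker--Campbell--Hausdorff series $\log(\exp x\cdot\exp y)$, whose tangent Sabinin algebra is $\Prim\A$ with \emph{all} of its operations, both the Mikheev--Sabinin brackets and the multioperator. Hence the whole statement reduces to the identity of primitive series
\[
\log(\exp x\cdot\exp y)=\mathcal{C}(x,\widehat{\Phi}(x,y)).
\]
Because $\mathcal{C}(x,w)=\log(\exp x\times\exp w)$, this is equivalent to $\exp x\cdot\exp y=\exp x\times\exp\widehat{\Phi}(x,y)$, that is, to the assertion that the $\times$-left-quotient $\exp x\backslash(\exp x\cdot\exp y)$ is exactly $\exp\widehat{\Phi}(x,y)$. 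This is the main obstacle. The point that makes it believable is that $\cdot$ and $\times$ share the canonical connection of $\F$, so their Mikheev--Sabinin (right alternative, geodesic) parts already agree by the previous proposition, and the quotient measures precisely the failure of $\cdot$ to be right alternative. By the very origin of the multioperator as the Taylor expansion of this failure, and by the definition of $\Phi$ as the symmetrization of the Shestakov--Umirbaev operations $p(\underline{u};\underline{v};z)$ over the arguments $y_1,\dots,y_n$, this defect should be the totally $y$-symmetric sum $y+\sum\Phi(\dots)=\widehat{\Phi}(x,y)$. Making this precise means comparing $\exp x\times\exp y=\sum_{i,j}x^iy^j/(i!\,j!)$, with all parentheses left-normed, against $\exp x\cdot\exp y$, in which the $x$- and $y$-powers are grouped before being multiplied, and tracking through \eqref{defShU} how the reassociation is absorbed into the operations $p$ and hence into $\Phi$; the bookkeeping of the Sweedler corrections in \eqref{defShU} and of the normalizing factorials is the technical heart of the argument.

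Granting the identity, the right-hand side integrates $\Prim\A$ because the left-hand side does, and by naturality $\mathcal{C}(x,\widehat{\Phi}(x,y))$ then integrates every nilpotent quotient of $\Prim\A$, hence every nilpotent Sabinin algebra, the Mikheev--Sabinin brackets being furnished by the $\times$-factor and the multioperator by $\widehat{\Phi}$. Finally, nilpotency of the resulting loop and the equality of its class with that of $\ll$ follow from Theorem~\ref{the} together with the fact that the tangent Sabinin algebra recovers $\ll$.
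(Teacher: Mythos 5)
Your reduction is set up correctly: the check that $\mathcal{C}(x,\widehat{\Phi}(x,y))$ is a primitive series is fine, and the functoriality mechanism (verify the statement on $\Prim\A$, then transport along homomorphisms $\Prim\A\to\ll$) is exactly the one used for the flat case. The problem is that the step you yourself flag as ``the main obstacle'' --- the identity $\exp x\cdot\exp y=\exp x\times\exp\widehat{\Phi}(x,y)$, equivalently $\log(\exp x\cdot\exp y)=\mathcal{C}(x,\widehat{\Phi}(x,y))$ --- is never proved; you give only a plausibility argument (``this defect \emph{should} be the totally $y$-symmetric sum'') and defer the rest to unperformed bookkeeping. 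That identity is not a peripheral technicality: it \emph{is} the proposition, since everything else in your argument is formal. Moreover, the route you propose for it is the hard one: expanding both sides and tracking the reassociation through \eqref{defShU} would amount to reproving from scratch that the Shestakov--Umirbaev multioperator (the symmetrization of the operations $p(\underline{u};\underline{v};z)$, with its $\frac{1}{m!n!}$ normalizations) coincides with the geometric discrepancy between the product of $\F$ and its geodesic companion $\times$; the Sweedler corrections and factorials you worry about are precisely where such a computation is delicate.

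The paper discharges this point by definition rather than by computation. In the Mikheev--Sabinin theory of smooth (or formal) local loops \cite{MS1, MP3}, the multioperator of a loop is \emph{defined} as (the Taylor coefficients of) the map $y\mapsto x\backslash_{\times}(x\cdot y)$ measuring the failure of the product to agree with its geodesic, right alternative companion; so the reconstruction formula $x\cdot y=x\times\widehat{\Phi}_x(y)$ holds tautologically for the geometric multioperator. The only substantive input is that the algebraically defined Shestakov--Umirbaev structure on $\Prim\A$ agrees with this geometric structure, and that is \cite[Theorem 6.1]{MP3} --- the same agreement theorem already invoked in the proof of the flat case. If you replace your Taylor-expansion plan by an appeal to this definition and to that theorem, your key identity holds at once and the remainder of your argument (functoriality, nilpotency, and the class count via Theorem~\ref{the}) goes through. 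So the architecture of your proof is right, but as written it has a genuine gap at its center, and the correct way to close it is the geometric definition of $\Phi$, not the reassociation bookkeeping you sketch.
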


This follows directly from the definition of the multioperator for a smooth local loop, see \cite{MP3, MS1}.

\subsection{The uniqueness of the integration}\label{here-we-define-B}
\begin{proposition}
For a given Sabinin algebra,  the only simply-connected analytic loop that integrates it is the one specified by the Baker-Campbell-Hausdorff  formula.
\end{proposition}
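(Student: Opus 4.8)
The plan is to split the uniqueness into a local statement and a monodromy argument. The local statement is the fundamental theorem of the Lie theory of loops: the tangent Sabinin algebra of an analytic loop determines the germ of the multiplication at the identity up to isomorphism (see \cite{MS1, MS2, PI2}). Indeed, the Mikheev-Sabinin brackets record the covariant derivatives of the torsion of the canonical connection at the identity, and together with the multioperator they recover the entire Taylor expansion of the product; hence two analytic loops with isomorphic tangent Sabinin algebras are locally isomorphic near their units. Applying this to the Baker-Campbell-Hausdorff loop $B$ on $\ll$ constructed above and to an arbitrary simply-connected analytic loop $L$ integrating $\ll$, I would obtain a local analytic isomorphism $\phi$ from a neighbourhood of $0$ in $B$ onto a neighbourhood of the unit in $L$.

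It then remains to globalize $\phi$. Since $B$ is carried by the vector space $\ll$, it is contractible, hence simply connected; $L$ is simply connected by hypothesis. I would extend $\phi$ by analytic continuation along paths emanating from $0$, propagating it through the loop product by the rule $\phi(a\cdot b)=\phi(a)\cdot\phi(b)$ whenever $a$, $b$ and their product already lie in the region reached. Simple connectedness of $B$ forces the continuations along homotopic paths to agree, so they assemble into a single global analytic map $\Phi\colon B\to L$. The homomorphism identity holds on a neighbourhood of the diagonal and, being an analytic identity of analytic maps on a connected manifold, propagates everywhere. Running the same construction on $\phi^{-1}$ yields a two-sided inverse, so $\Phi$ is an isomorphism of loops, compatible with the identification of both tangent algebras with $\ll$ by functoriality of the tangent-algebra construction.

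The main obstacle is the monodromy step, because the classical proof that a local homomorphism out of a simply connected group extends globally quietly uses associativity when products are rearranged along a path. Here one must verify path-independence of the continuation of $\phi$ without associativity. I would do this by triangulating a homotopy between two paths and propagating $\phi$ across the homotopy one elementary cell at a time, each step being governed by the local isomorphism on a small chart where only the germ of the multiplication intervenes; the obstruction to gluing across a cell is precisely the failure of $\phi$ to be a local homomorphism, which vanishes by construction. Alternatively, the nilpotency of $\ll$ permits a purely algebraic route: present both $B$ and $L$ as iterated central extensions along the commutator-associator filtration and build $\Phi$ one layer at a time, the extension data at each layer being determined by the Sabinin operations. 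The monodromy argument is cleaner, so I would pursue it first and fall back on the extension-theoretic version only if the non-associative continuation proves troublesome.
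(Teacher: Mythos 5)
Your overall decomposition (local uniqueness of the germ plus globalization) matches the paper's, but the globalization step is where your proof has a genuine gap, and it is exactly the step the paper is engineered to avoid. Your continuation scheme needs two separate things: \emph{existence} of the continuation of $\phi$ along every path, and \emph{path-independence} of the result. Your triangulation argument only addresses the second. The first is the real problem: already at the very first step, propagating by the rule $\phi(a\cdot b)=\phi(a)\cdot\phi(b)$ requires knowing that $\phi(a)\phi(b)=\phi(a')\phi(b')$ whenever $ab=a'b'$ with all factors in the region already reached, and in the classical group-theoretic proof this well-definedness is precisely where associativity enters (one rewrites $a',b'$ in terms of $a,b$ and cancels). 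Without associativity there is no way to compare the two products, and your remark that the obstruction ``is the failure of $\phi$ to be a local homomorphism, which vanishes by construction'' is circular: $\phi$ is known to be a homomorphism only near the unit, whereas the coherence you need is at points far from the unit, where $\phi$ has not yet been defined. So the monodromy route, as sketched, does not go through.

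The paper sidesteps continuation altogether: the exponential map of $L$ is a globally defined analytic map $E\colon B(\ll)\to L$ (as a manifold $B(\ll)$ is just $\ll$, so the exponential of $L$ can be read as a map out of $B(\ll)$), and by local uniqueness of integration $E$ restricts near the unit to your local isomorphism $\phi$. Hence the two analytic maps $(a,b)\mapsto E(ab)$ and $(a,b)\mapsto E(a)E(b)$ agree on a nonempty open set, and the identity theorem for analytic maps on the connected manifold $B(\ll)\times B(\ll)$ makes $E$ a global homomorphism --- no path continuation and no use of contractibility of $B(\ll)$ at all. Then $E$ is a covering, and simple connectedness of $L$ (not of $B(\ll)$) forces $E$ to be an isomorphism. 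Note that you already invoke this identity-theorem step at the end of your second paragraph; once you allow it, all the work you put into constructing $\Phi$ by continuation can be replaced by the observation that the required global analytic extension of $\phi$ already exists, namely $E$. Your fallback via central extensions along the commutator-associator filtration is plausible in the nilpotent case, but as stated it is a one-sentence sketch in which the well-definedness of the extension data at each layer is again the nontrivial point.
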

\begin{proof}
Let $\ll$ be a Sabinin algebra, $L$ an analytic loop integrating $\ll$, and $B(\ll)$ the loop given by the Baker-Campbell-Hausdorff  formula. The exponential map
$\ll\to L$ can be thought of as a map $E: B(\ll)\to L$. Since the infinitesimal loop integrating $\ll$ is unique, $E$ is locally a loop homomorphism.
As both loops together with the exponential map are analytic, the map $E$, actually, a bona fide homomorphism. In particular, it is a covering, and the only possibility for $L$ to be simply connected is if $E$ is a loop isomorphism.
\end{proof}
\begin{remark}
If a Sabinin algebra integrates to an analytic loop $L$, there is always an infinite number of non-analytic loops which integrate the same Sabinin algebra. For instance, consider a perturbation of the product in $L$ which is trivial on a neighbourhood of $L\times \{e\} \cup \{e\}\times L$. If it is small enough, it will give rise to a loop with the same Sabinin algebra. 
It can be shown that nilpotent loops are always analytic.
\end{remark}
The results of this section can be summarized in the following statement:

\begin{theorem}
The Baker-Campbell-Hausdorff series defines an equivalence of the category of nilpotent Sabinin algebras of class $n$ and the category of simply connected nilpotent loops of class $n$.
\end{theorem}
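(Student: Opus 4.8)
The plan is to realize the statement as an equivalence implemented by a pair of mutually quasi-inverse functors. Write $B$ for the functor sending a nilpotent Sabinin algebra $\ll$ of class $n$ to the loop $B(\ll)$ carrying the product $\mathcal{C}(x,\widehat{\Phi}(x,y))$ of the construction in \S\ref{here-we-define-B}, and write $\mathcal{T}$ for the functor assigning to a loop its tangent Sabinin algebra. The first task is to check that both are genuinely functorial and land in the correct categories. For $B$ on morphisms, the key observation is that the BCH product is a \emph{primitive series}, that is, a universal expression built entirely from the Mikheev--Sabinin brackets and the multioperator; hence any Sabinin homomorphism $f\colon\ll\to\ll'$, which by definition preserves all these operations (and therefore respects the filtrations $\ll_i$, so that convergence is unaffected), satisfies $f(\mathcal{C}(x,\widehat{\Phi}(x,y)))=\mathcal{C}(f(x),\widehat{\Phi}(f(x),f(y)))$ and so descends to a loop homomorphism $B(f)$. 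The loop $B(\ll)$ is built on the underlying contractible vector space of $\ll$ and is therefore simply connected, while Theorem~\ref{the} bounds its class by $n$; functoriality of $\mathcal{T}$ is the standard naturality of the torsion-and-its-derivatives construction already exploited in the proof of the integration proposition.

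The second step is to produce the two natural isomorphisms. The isomorphism $\mathcal{T}\circ B\cong\mathrm{Id}$ is exactly the content of the integration proposition: the tangent Sabinin algebra of $B(\ll)$ coincides with $\ll$. Naturality in $\ll$ is inherited from that of the exponential map, which simultaneously identifies $\ll$ with the tangent space at the unit of $B(\ll)$ and intertwines $f$ with $B(f)$; it also shows that $B(\ll)$ has class exactly $n$, since its tangent algebra $\ll$ does. For the opposite composite, given a simply connected nilpotent loop $L$ of class $n$ I would set $\ll=\mathcal{T}(L)$ and invoke the uniqueness proposition: because nilpotent loops are analytic (preceding remark), $L$ is a simply connected analytic loop integrating $\ll$, and the unique such loop is $B(\ll)$, so the exponential map furnishes an isomorphism $B(\mathcal{T}(L))\cong L$, natural in $L$. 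Once $\mathcal{T}\circ B\cong\mathrm{Id}$ and $B\circ\mathcal{T}\cong\mathrm{Id}$ are in hand, the formal fact that mutually quasi-inverse functors constitute an equivalence completes the argument, so that $B$ is automatically fully faithful and essentially surjective and no separate verification of these is needed.

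I expect the main obstacle to be the $B\circ\mathcal{T}\cong\mathrm{Id}$ direction, and within it the appeal to analyticity: the uniqueness proposition is stated for analytic loops, so the whole argument rests on the assertion that a nilpotent loop of finite class is analytic. Any gap there would have to be closed by an independent argument, for instance by exhibiting the exponential map of the canonical connection as a convergent primitive series on the nilpotent loop, hence analytic. A secondary point requiring care is the matching of nilpotency classes: one must confirm that the commutator--associator class of $L$ equals the Sabinin nilpotency class of $\mathcal{T}(L)$, so that $\mathcal{T}$ truly maps class-$n$ loops to class-$n$ algebras. This is precisely the place where the remark following Theorem~\ref{the} — that the $N$-sequence $V_i$ need not coincide with the commutator--associator filtration — must be reconciled with the specific filtration by the $\ll_i$ coming from the BCH construction.
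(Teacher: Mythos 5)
Your proposal is correct and takes essentially the same approach as the paper, which offers no separate argument but simply assembles the section's results exactly as you do: the two integration propositions give $\mathcal{T}\circ B\cong\mathrm{Id}$, and the uniqueness proposition together with the remark that nilpotent loops are analytic gives $B\circ\mathcal{T}\cong\mathrm{Id}$. The two delicate points you flag (the unproved analyticity claim and the matching of nilpotency classes via the two-sided inequality class of $B(\ll)\leq$ class of $\ll$ and class of $\mathcal{T}(L)\leq$ class of $L$) are precisely the points the paper also leaves implicit.
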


\section{The Ado theorem for nilpotent Sabinin algebras} 
The associative case of the following theorem and of its non-linear version in the next section appears in \cite[Proposition 3.6]{Quillen}.
\begin{theorem}\label{thm:ado} 
Let $\ll$ be a nilpotent Sabinin algebra of class $n$, $U(\ll)$ its universal enveloping algebra and $\overline{U(\ll)}\subset U(\ll)$ the augmentation ideal. Then the composition
$$\ll\hookrightarrow U(\ll)\to U(\ll)/ {\overline{U(\ll)}}\,^{n+1}$$
is injective. 
\end{theorem}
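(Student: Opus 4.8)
The plan is to establish the sharper equality $\ll\cap\overline{U(\ll)}^m=\ll_m$ for every $m\ge 1$; the statement is then the case $m=n+1$, where the right-hand side vanishes. Throughout I abbreviate $I=\overline{U(\ll)}$. One inclusion is formal. First I would check that each Shestakov--Umirbaev operation lands in the expected power of $I$: if the arguments of $p(\underline u;\underline v;z)$ lie in $I$-powers whose exponents sum to $w$, then both sides of (\ref{defShU}) lie in $I^{w}$, and an induction on the number of arguments extracts $p(\underline u;\underline v;z)\in I^{w}$. Since by (\ref{defMS}) and the averaging formula the Mikheev--Sabinin brackets and the multioperator are built from the $p$'s, a bracket of weight $N$ evaluated on elements of $\ll$ lies in $I^{N}$; hence $\ll_m\subseteq I^{m}$, the easy half.

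For the reverse inclusion I would use the Poincar\'e--Birkhoff--Witt theorem for $U(\ll)$ together with the fact that every Sabinin operation is additive for the lower central filtration, i.e. $\langle\ll_{i_1},\dots,\ll_{i_l};\ll_i,\ll_j\rangle\subseteq\ll_{i_1+\dots+i_l+i+j}$ and similarly for $\Phi$. Choose a basis $\{e_\alpha\}$ of $\ll$ adapted to the filtration, so that $w(\alpha):=\max\{k:e_\alpha\in\ll_k\}$ is defined and $\{e_\alpha:w(\alpha)\ge k\}$ is a basis of $\ll_k$. The ordered monomials in the $e_\alpha$ then form a basis of $U(\ll)$; assign to such a monomial the weight $\sum_i w(\alpha_i)$ and let $V_m\subseteq U(\ll)$ be the span of the monomials of weight $\ge m$. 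The heart of the argument is the claim that $I^m=V_m$ for all $m$. The inclusion $V_m\subseteq I^m$ follows from $\ll_k\subseteq I^k$ by taking products. For the opposite inclusion I would show that the product of $U(\ll)$ respects this filtration, $V_a\cdot V_b\subseteq V_{a+b}$: when a product of basis monomials is rewritten in ordered form, every rewriting replaces a product of elements of $\ll_i$ and $\ll_j$ by terms of the same weight lying in $\ll_{i+j}$, precisely because the operations are filtration-additive. As $I=V_1$, induction then gives $I^m\subseteq V_m$.

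Granting $I^m=V_m$, the conclusion is immediate. An element of $\ll$ is a linear combination of the length-one monomials $e_\alpha$, and such a combination lies in $V_m$ exactly when it involves only those $e_\alpha$ with $w(\alpha)\ge m$, that is, when it lies in $\ll_m$; thus $\ll\cap I^m=\ll_m$. Taking $m=n+1$ gives $\ll\cap I^{n+1}=\ll_{n+1}=0$, so the composition in the statement is injective. (The same description of $V_\bullet$ shows that $U(\ll)/I^{n+1}$ has a basis of ordered monomials of weight at most $n$, hence is finite-dimensional, which is the Ado-type content of the theorem.) The step I expect to be the real obstacle is the weight-homogeneity of the straightening, i.e. $V_a\cdot V_b\subseteq V_{a+b}$: this is exactly what ties the augmentation filtration of $U(\ll)$ to the intrinsic lower central filtration of $\ll$, and to make it rigorous one must read off the precise form of the product on $U(\ll)$ from the construction underlying the Poincar\'e--Birkhoff--Witt theorem. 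Equivalently, it amounts to identifying $\mathrm{gr}_I U(\ll)$ with the universal enveloping algebra of the associated graded Sabinin algebra $\bigoplus_m\ll_m/\ll_{m+1}$.
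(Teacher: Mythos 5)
Your proposal is correct and follows essentially the same route as the paper: your adapted-basis weight filtration $V_m$ is exactly the sublevel filtration of the paper's function $N$ on the PBW basis, and the step you flag as the real obstacle, $V_a\cdot V_b\subseteq V_{a+b}$, is precisely the paper's Lemma~\ref{thelemma}, $N(uv)\geq N(u)+N(v)$, proved there by the straightening induction you describe, using the identity (\ref{defShU}) together with the filtration-additivity of the brackets. The only difference is that the paper proves just the inclusion it needs, $\ll\cap{\overline{U(\ll)}}\,^{n+1}\subseteq\ll_{n+1}=0$, so your easy half $\ll_m\subseteq{\overline{U(\ll)}}\,^{m}$ and the exact equality $\ll\cap{\overline{U(\ll)}}\,^{m}=\ll_m$ are correct but not required.
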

In particular, any nilpotent Sabinin algebra of finite dimension can be realized as a Sabinin subalgebra in a finite-dimensional algebra, since ${\overline{U(\ll)}}\,^{n+1}$ is of finite codimension in $U(\ll)$ by the Poincar\'e-Birkhoff-Witt theorem.

\medskip

Let $\B_n=\emptyset$ and let $\B_n\sqcup\ldots\sqcup\B_i$ be a basis of $\ll_i$. 
Define $N(a)=i$ if $a\in \B_i$ and set $N(0)=\infty$. Choose an order on  $\sqcup\,\B_i$ so that $N$ is non-decreasing. The corresponding  PBW basis consists of products of the form
$$((a_1a_2)\ldots) a_d$$
with $a_k\leq a_{k+1}$ and $d\geq 0$. We define 
$$N(((a_1a_2)\ldots) a_d)=N(a_1)+\ldots + N(a_d)$$
and extend this definition to the whole of $U(\ll)$ by
$$N\left(\sum \alpha_I a_I\right)= \min \{ N(a_I)\, |\, \alpha_I\neq 0 \},$$
where $I$ varies over all the ordered sets of elements of $\sqcup\,\B_i$ and $a_I$ is the basis element of $U(\ll)$ corresponding to $I$. 
Clearly,
$$N(u+v)\geq \min\{N(u), N(v)\}.$$

\begin{lemma}\label{thelemma}
$N(uv)\geq N(u)+N(v).$
\end{lemma}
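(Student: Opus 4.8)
The plan is to show that $N$ is a multiplicative filtration by straightening products into the PBW basis and tracking degrees. First I would reduce to the case where $u$ and $v$ are PBW basis monomials: since the product in $U(\ll)$ is bilinear and $N(\sum\alpha_I a_I)=\min_I N(a_I)$, together with $N(u+v)\ge\min\{N(u),N(v)\}$, it suffices to bound $N(uv)$ from below when $u=((a_1a_2)\ldots)a_d$ and $v=((b_1b_2)\ldots)b_e$ are standard monomials, for which $N(u)=\sum_k N(a_k)$ and $N(v)=\sum_l N(b_l)$ by definition. I would then prove the more general claim $S(m)$: for every non-associative monomial $w$ in $m$ basis generators $c_1,\ldots,c_m$, in any order and with any bracketing, one has $N(w)\ge\nu(w)$, where $\nu(w)=\sum_k N(c_k)$. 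Applying $S(d+e)$ to $w=uv$, whose generators are exactly the $a_k$ and $b_l$, gives $N(uv)\ge\nu(uv)=N(u)+N(v)$, which is the lemma. A standard (sorted, left-normed) monomial realizes $N(w)=\nu(w)$ exactly, again by the definition of $N$.

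The claim $S(m)$ is proved by induction on $m$, the cases $m\le1$ being immediate. For the inductive step I would straighten an arbitrary $m$-generator monomial $w$ into a linear combination of standard monomials using two rewriting moves. The reordering move replaces an out-of-order adjacent pair using the commutator, which by \eqref{defMS} satisfies $ab-ba=-\langle a,b\rangle$; by the spanning definition of the subspaces $\ll_k$ one has $\langle\ll_i,\ll_j\rangle\subseteq\ll_{i+j}$, so the commutator is a single element of $\ll_{N(a)+N(b)}$, i.e. one generator of degree at least $N(a)+N(b)$. The reassociation move uses \eqref{defShU} to rewrite $u'(v''z)$ as $(u'v'')z$ plus the correction $u'_{(1)}v''_{(1)}\cdot p(\underline{u'}_{(2)};\underline{v''}_{(2)};z)$; here the coproduct only redistributes generators and so preserves their total degree, while each primitive operation $p$ induces a bracket landing, by the same spanning definition, in $\ll_k$ with $k$ the total degree of its arguments. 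In both moves the main term keeps the same $m$ generators and the same value of $\nu$, whereas every correction term has $\nu$ at least $\nu(w)$ and, crucially, strictly fewer than $m$ generators, since each commutator or primitive operation collapses several generators into a single one of at least their combined degree.

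I would organize the induction to avoid circularity as follows. Every correction term produced during straightening has fewer than $m$ generators, hence satisfies $N\ge\nu\ge\nu(w)$ by the inductive hypothesis for indices below $m$, and is simply set aside. The main terms always retain all $m$ generators, so the straightening among $m$-generator monomials is controlled by a separate lexicographic measure — first the deviation of the bracketing from left-normed form, reduced by reassociation absorbing the generators of the right-hand factor one at a time, then the number of out-of-order pairs, reduced by reordering — which is well-founded and forces termination. At the end $w$ is written as a linear combination of standard monomials, each of $\nu$-value $\nu(w)$ and hence of $N$-value exactly $\nu(w)$, plus correction terms of $N$-value at least $\nu(w)$; the min-rule then yields $N(w)\ge\nu(w)$.

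The step I expect to be the main obstacle is verifying that every correction term produced by \eqref{defShU} has strictly fewer generators than the monomial being rewritten. This amounts to checking that no reassociation correction reproduces the main term — equivalently, that every primitive operation occurring genuinely collapses at least two generators, the lowest such being the weight-three operations measuring the associator — and that the coproduct in $U(\ll)$ respects the degree filtration. Once this bookkeeping is in place, the degree bounds coming from \eqref{defMS} and the defining spanning sets of the $\ll_k$ make the remainder of the argument routine.
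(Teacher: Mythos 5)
Your global strategy --- reduce to PBW monomials, rewrite using \eqref{defShU} and \eqref{defMS}, observe that every correction term collapses several generators into a single element of at least their combined degree, and induct on the number of generators --- is sound, and it is in essence the paper's own proof, which runs the same rewriting as a nested induction on the lengths of the two factors rather than as a rewriting system. Moreover, the step you single out as the likely obstacle is not where the trouble lies: every primitive operation that actually occurs in \eqref{defShU} has at least one generator in each of its first two argument groups (the terms with an empty group never arise), so it merges at least three generators into one; and since the coproduct is an algebra map and the generators are primitive, the coproduct of a left-normed word indeed only distributes its generators between the two tensor factors, preserving total degree.

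The genuine gap is in your reordering move and the termination measure built on it. The identity $ab-ba=-\langle a,b\rangle$ can only be substituted where $ab$ literally occurs as a sub-product of the monomial. In a left-normed word $((c_1c_2)c_3)\cdots c_m$ the only two-generator sub-product is $c_1c_2$; so once your first lexicographic component (deviation from left-normed form) has been driven to zero, an inversion deeper in the word is unreachable. Concretely, for $(c_1c_2)c_3$ with $c_1\leq c_2$ and $c_2>c_3$, neither your reordering move nor your reassociation move applies: the rewriting halts at a non-standard monomial and your claim $S(3)$ is not established for it. Repairing this forces you to un-associate first (the inverse of your reassociation move, which you have not licensed), commute, and re-associate; that composite temporarily increases the bracketing deviation, so your lexicographic measure does not decrease monotonically. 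This is exactly the point at which the paper derives from \eqref{defShU} and \eqref{defMS} the identity
$$wy\cdot z - wz\cdot y = -w_{(1)}\langle \underline{w}_{(2)};y,z\rangle,$$
valid for $w$ a left-normed product of primitives and $y,z\in\ll$: it swaps the last two generators of a left-normed word in a single step, at the cost of corrections which are \emph{higher} Mikheev-Sabinin brackets, not just binary commutators; these collapse part of $w$ together with $y$ and $z$, so they still have strictly fewer generators and degree at least that of the original word. If you take this derived identity as your reordering move (and apply \eqref{defShU} inside-out, since it is stated only for left-normed factors), inversions decrease among left-normed $m$-generator words, termination is restored, and your induction closes --- at which point your argument coincides with the paper's.
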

\begin{proof}

It is sufficient to prove the lemma in the case when $u$ and $v$ are two elements of the PBW basis. We shall prove a slightly more precise statement, namely that if $$I=A\sqcup B$$ is  an ordered set of basis elements of $\ll$, then $$a_I-a_{A}a_{B}$$ is a linear combination of basis elements of $U(\ll)$ which have smaller length than $a_I$ and the same or greater value of $N$ as $a_I$.

First, consider the case where $B=\{a\}$ consists of one basis element of $\ll$, and let $$a_A=((a_1a_2)\ldots) a_n=a_{A'} a_n.$$
We shall use induction on $n=|\,A\,|.$
If $a\geq a_n$, $a_A a$ is an element of the PBW basis and there is nothing to prove.
Otherwise, observe that (\ref{defShU}) and (\ref{defMS}) imply that for all $w\in U(\ll)$ and $y,z\in \ll$
$$wy\cdot z - wz\cdot y= - w_{(1)}\langle \underline{w}_{(2)}; y, z\rangle,$$
and, in particular,
$$a_Aa- a_{A'} a\cdot a_n = - a_{A'_{(1)}}\langle  \underline{a}_{A'_{(2)}}; a_n, a\rangle.$$
By the induction assumption the right-hand side is a linear combination of basis elements with length smaller than $n+1$ (actually, smaller than $n$). Also, the value of $N$ on the right-hand side  is equal to at least 
$$N(a_{A'_{1}})+ N(a_{A'_{2}})+N(a_n)+N(a)= N(a_A)+N(a),$$
where $A'=A'_1\sqcup A'_2$ is an arbitrary decomposition of the ordered set $A'$.
If the term $a_{A'} a\cdot a_n$ is an element of the PBW basis, it is equal to $a_I$ with $I=A\sqcup\{a\}$, and we are done. If not, applying the induction assumption once more we get 
$$a_{A'} a= a_{I'}+\sum\alpha_J a_J,$$
where $I' = A' \sqcup \{a\}$ and $a_J$ are basis elements with length smaller than $n=N(a_{I'})$ and such that $N(a_J)\geq N(a_{I'})$. Since $a_n\geq a_k$ for all $a_k\in I'$, the product $a_{I'}a_n$ is a basis element, namely, $a_{I'}a_n=a_{A\sqcup \{a\}}$. On the other hand, again by the induction assumption, the length of any element in the decomposition of $a_J a_n$ is smaller than that of $a_{A\sqcup\{a\}}$ and $$N(a_Ja_n)\geq N(a_J)+N(a_n) \geq N(a_{A'\sqcup \{a\}})+ N(a_n)=N(a_{A\sqcup \{a\}}).$$ This settles the case  $|\,B\,|=1$; in other words, we have shown that $$N(uv)\geq N(u)+N(v)$$ whenever $v\in \ll$.

In the case when $|\,B\,|>1$ we use induction on $m=|\,B\,|$.
Write  $$a_B=((a_1a_2)\ldots) a_m=a_{B'} a_m.$$
We have $$a_A a_B= a_A \cdot a_{B'}a_m= a_A  a_{B'} \cdot a_m + a_{A_{(1)}}  a_{{B'}_{(1)}}\cdot p( \underline{a}_{A_{(2)}} ; a_{{B'}_{(2)}}; a_m).$$
It remains to use the induction assumption.

\end{proof}

\begin{corollary} If $\ll_{n+1}=0$, the ideal ${\overline{U(\ll)}}\,^{n+1}$  satisfies
${\overline{U(\ll)}}\,^{n+1}\cap \ll=0$.
\end{corollary}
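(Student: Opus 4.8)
The plan is to read the statement straight off the superadditivity of $N$ from Lemma~\ref{thelemma}, once I pin down how $N$ behaves on each of the two subspaces in the intersection. The whole point of the invariant $N$ is that it measures the filtration level. For a \emph{nonzero} $a\in\ll$, writing $a=\sum\alpha_k a_k$ in the chosen basis $\sqcup\,\B_i$ of $\ll$, at least one coefficient is nonzero; and since $\ll_{n+1}=0$, every basis element $a_k$ of $\ll$ lives in some level $\leq n$, so $N(a_k)\leq n$ and therefore $N(a)=\min\{N(a_k)\mid\alpha_k\neq 0\}\leq n$. On the other side, every PBW basis element lying in the augmentation ideal $\overline{U(\ll)}$ is a product $((a_1a_2)\ldots)a_d$ of length $d\geq 1$ with $N=N(a_1)+\ldots+N(a_d)\geq d\geq 1$, whence $N(w)\geq 1$ for every $w\in\overline{U(\ll)}$.

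First I would show that $N$ is at least $n+1$ on the whole ideal $\overline{U(\ll)}^{\,n+1}$. A spanning set for this ideal is the collection of all parenthesized products of $n+1$ factors drawn from $\overline{U(\ll)}$. For a single such product $w$, peeling off the outermost multiplication writes $w=w'w''$ with $w'$ and $w''$ products of fewer factors; applying Lemma~\ref{thelemma} and inducting on the number of factors gives $N(w)\geq(n+1)\cdot 1=n+1$. Because $N(u+v)\geq\min\{N(u),N(v)\}$, this bound passes to arbitrary linear combinations, so every element of $\overline{U(\ll)}^{\,n+1}$ has $N\geq n+1$.

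Combining the two bounds closes the argument: if some $a\in\overline{U(\ll)}^{\,n+1}\cap\ll$ were nonzero, it would satisfy simultaneously $N(a)\geq n+1$ and $N(a)\leq n$, which is absurd; hence $a=0$.

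I do not expect a serious obstacle, since all the genuine content is already packaged in Lemma~\ref{thelemma}; what remains requires only care of a bookkeeping kind. The first point is to make sure the restriction of $N$ to $\ll\subset U(\ll)$ is genuinely bounded by $n$, which is exactly where the hypothesis $\ll_{n+1}=0$ enters the corollary. The second is the non-associativity of $U(\ll)$: since an $(n+1)$-fold product can be parenthesized in many ways, the induction should be phrased in terms of peeling off the top-level split $w=w'w''$ rather than assuming any normal form. Finally, it is worth noting that this corollary is precisely the injectivity asserted in Theorem~\ref{thm:ado}, as the kernel of the composite $\ll\to U(\ll)/\overline{U(\ll)}^{\,n+1}$ is exactly $\ll\cap\overline{U(\ll)}^{\,n+1}$.
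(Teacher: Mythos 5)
Your proof is correct and takes essentially the same route as the paper's: both use Lemma~\ref{thelemma} to show that $N$ exceeds $n$ on ${\overline{U(\ll)}}\,^{n+1}$, and then observe that $\ll_{n+1}=0$ forces $N\leq n$ on every nonzero element of $\ll$, so the intersection is zero. The paper states these two facts without elaboration; you merely spell out the details (the induction on the number of factors in a parenthesized product and the behaviour of $N$ on linear combinations), all of which are sound.
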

\begin{proof}
According to Lemma~\ref{thelemma}, the value of $N$ on any element of ${\overline{U(\ll)}}\,^{n+1}$ is greater than $n$. On the other hand,  $a\in \ll$ with $N(a)>n$ must belong to $\ll_{n+1}=0$.
\end{proof}

This proves Theorem~\ref{thm:ado}.

\section{Lie envelopes and axioms for Sabinin algebras}

\subsection{Lie envelopes of flat Sabinin algebras}
Let $\lie$ be a Lie algebra with a vector space decomposition
$$\lie=\subalg\oplus \ll$$
where $\subalg$ is a subalgebra. Denote by $\pi$ the
projection of $\lie$ to $\ll$ along $\subalg$. Then the iterated Lie brackets in
$\lie$ give rise to a family of multilinear operations on $\ll$:
\begin{equation}\label{pi}
(x_1,\ldots,x_n)=\pi [x_1,[\ldots [x_{n-1},x_n]]]
\end{equation} 
for each $n\geq 2$ and $x_i\in V$. One may also define the family of multilinear brackets $\langle x_1, \ldots, x_n ; y, z\rangle$ by the identity
\begin{equation}\label{brackets}
(x_1,\ldots,x_n, y, z) + \langle x_1, \ldots, x_n; y, z\rangle \\
+\sum_{t=1}^n \sum_{\alpha} (x_{\alpha_1},\ldots x_{\alpha_t}, \langle x_{\alpha_{t+1}}, \ldots, x_{\alpha_n}; y, z\rangle )=0,
\end{equation} 
where $\alpha$ are the shuffles of the $x_i$, that is, permutations with $\alpha_1<\ldots<\alpha_t$ and $\alpha_{t+1}<\ldots<\alpha_n$. 
It turns out that the brackets defined in this way satisfy the same identities as the Mikheev-Sabinin brackets; as a consequence, $\ll$ receives the structure of a flat Sabinin algebra, see \cite{MS1, PI2}. The Lie algebra $\lie$ is known as a {\em Lie envelope} for $\ll$. 

\begin{theorem}\label{nilenvelope}
A flat Sabinin algebra is nilpotent if and only if it has a nilpotent Lie envelope of the same class.
\end{theorem}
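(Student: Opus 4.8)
The plan is to split the equivalence into two one-sided implications and then recover the precise matching of classes for free. Concretely, I would prove: \textbf{(I)} if a Lie envelope $\lie=\subalg\oplus\ll$ is nilpotent of class $m$, then the flat Sabinin algebra $\ll$ is nilpotent of class at most $m$; and \textbf{(II)} if $\ll$ is nilpotent of class $n$, then $\ll$ admits a nilpotent Lie envelope of class at most $n$. Granting both, suppose $\ll$ has class exactly $n$. By (II) there is an envelope $\lie$ of some class $m\le n$; applying (I) to this $\lie$ gives that $\ll$ has class at most $m$, whence $n\le m\le n$ and $m=n$. Thus the constructed envelope has class exactly $n$, and conversely any nilpotent envelope of class $m$ forces $\ll$ to have class $m$ by (I). So the theorem reduces to (I) and (II), with no separate computation of the nilpotency class ever required.

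For (I), the key is a weight estimate: every bracket of the flat Sabinin structure on $\ll$ of weight $w$ lies in $\pi(\lie^w)$, where $\lie=\lie^1\supseteq\lie^2\supseteq\cdots$ is the lower central series of $\lie$. I would argue by induction on $w$ using the defining formulas (\ref{pi}) and (\ref{brackets}). For the operations $(x_1,\ldots,x_w)=\pi[x_1,[\ldots,[x_{w-1},x_w]]]$ the claim is immediate, since a $w$-fold iterated Lie bracket of elements of $\ll$ lies in $\lie^w$. Solving (\ref{brackets}) for $\langle x_1,\ldots,x_n;y,z\rangle$ expresses it through one operation of weight $n+2$ together with terms $(x_{\alpha_1},\ldots,x_{\alpha_t},b)$ in which $b$ is a bracket of lower weight. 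The one delicate point is that this recursion substitutes $\pi$ of a Lie element back into a fresh Lie bracket: writing the substituted argument as $b=\pi(B)$ with $B\in\lie^{\,w-t}$, the correction $B-\pi(B)$ lies in $\subalg\cap\lie^{\,w-t}$, so both $[\,\cdot\,,B]$ and $[\,\cdot\,,B-\pi(B)]$ land in $\lie^{\,w-t+1}$; after nesting the remaining $t$ arguments and applying $\pi$, one lands in $\pi(\lie^{\,w})$ regardless of the projection ambiguity. Hence if $\lie^{m+1}=0$, all brackets of weight $m+1$ vanish and $\ll$ is nilpotent of class at most $m$.

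For (II), I would produce a nilpotent envelope by realizing the general Lie-envelope construction inside a nilpotent associative algebra. By Theorem~\ref{thm:ado} the Sabinin algebra $\ll$ embeds into $A=U(\ll)/\overline{U(\ll)}^{\,n+1}$, whose augmentation ideal satisfies $\overline{A}^{\,n+1}=0$ by Lemma~\ref{thelemma}. The Lie envelope of a flat Sabinin algebra is built from the multiplication operators on its enveloping algebra (the smash-product realization of P\'erez-Izquierdo, see \cite{PI2, MS1}); carrying out this construction with $A$ in place of $U(\ll)$, the relevant operators are left and right multiplications by elements of $\overline{A}$, which raise the $\overline{A}$-adic filtration by one and so generate an associative algebra $M$ with $M^{n+1}=0$. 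The spanned Lie algebra $\lie$ then satisfies $\lie^k\subseteq M^k$, so $\lie^{n+1}=0$ and $\lie$ is nilpotent of class at most $n$; the decomposition $\lie=\subalg\oplus\ll$ and the induced brackets come with the construction. (A geometric alternative would integrate $\ll$ to a simply-connected nilpotent loop via Theorem~\ref{the} and take the Lie algebra of its multiplication group, using the reductive structure of the associated homogeneous space; the same filtration forces nilpotency.)

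The main obstacle is the verification in (II) that the concretely realized $\lie$ is honestly a Lie envelope of the given $\ll$, rather than of some larger flat Sabinin algebra: one must check that the reductive complement to $\subalg$ is exactly $\ll$ carrying its original Mikheev-Sabinin brackets, equivalently that the projection $\pi$ reproduces (\ref{pi}) and (\ref{brackets}) for the operations coming from $A$. This amounts to confirming that passing to the truncation $A$ does not disturb the envelope below weight $n+1$ and that the multioperator stays trivial, so that flatness is preserved. Once this is in place, direction (I) supplies the exact class as explained above.
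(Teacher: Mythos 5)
Your step \textbf{(I)} is false, and it fails exactly at the point you flagged as ``the one delicate point.'' Here is a counterexample. Let $\lie$ be the free nilpotent Lie algebra of class $2$ on generators $a,b,c$ (basis $a,b,c,[a,b],[a,c],[b,c]$), let $\subalg=\kk\,([a,b]-c)$, a one-dimensional (hence abelian) subalgebra, and let $\ll=\spann(a,b,c,[a,c],[b,c])$, so that $\lie=\subalg\oplus\ll$. Then $\pi[a,b]=c$, so $\langle a,b\rangle=-c$, and the weight-three composed bracket
$$\langle\langle a,b\rangle,a\rangle=-\pi[\langle a,b\rangle,a]=\pi[c,a]=-[a,c]\neq 0.$$
Thus $\ll_3\neq 0$: the flat Sabinin algebra $\ll$ has class at least $3$, while its Lie envelope $\lie$ has class $2$. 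So a nilpotent envelope of class $m$ does \emph{not} bound the class of $\ll$ by $m$. The reason your induction breaks is precisely the unjustified claim that $B-\pi(B)\in\subalg\cap\lie^{\,w-t}$: the projection $\pi$ does not respect the lower central series (here $B=[a,b]\in\lie^2$ but $\pi(B)=c\notin\lie^2$), so the correction terms $[\,\cdot\,,B-\pi(B)]$ genuinely drop in filtration. As a consequence your scheme for recovering the exact class collapses: it is also false that ``any nilpotent envelope of class $m$ forces $\ll$ to have class $m$'' --- the algebra $\ll$ above has nilpotent envelopes of class $2$ and of class $3$.

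What survives is the weaker statement the paper actually uses for the backward implication: a nilpotent envelope makes $\ll$ nilpotent, with no control on the class. The matching of classes is then obtained entirely within the forward direction by exhibiting one specific envelope of class exactly $n$: the paper takes the \emph{free} Lie envelope $\lie_f=\subalg_f\oplus\ll$ of Theorem~\ref{thm:axioms}, observes that since the operations $(x_1,\ldots,x_k)$ vanish for $k>n$ every right-normed bracket of weight $>n$ lies in $\subalg_f=\ker\pi_f$, hence (by the rewriting lemma) the whole lower central term $(\lie_f)_{n+1}$ lies in $\subalg_f$, and passes to $\lie_f/(\lie_f)_{n+1}=\subalg_f/(\lie_f)_{n+1}\oplus\ll$; this quotient has class exactly $n$ because the free Lie algebra is graded. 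No class-decreasing statement like your (I) is ever invoked. Your step \textbf{(II)} is in the spirit of the paper's multiplication-algebra machinery (the Lemma that $\PMlt(\ll)$ is nilpotent of class at most $n$, together with Proposition~\ref{coincidence} identifying $\PMlt$ as an envelope for the Mikheev--Sabinin brackets), but as written it defers precisely the essential verification --- that the operator Lie algebra is an envelope of $\ll$ carrying its original brackets --- which you yourself call ``the main obstacle''; and in any case (II) only yields class at most $n$, so with (I) gone the ``same class'' part of the theorem remains unproved in your proposal.
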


We defer the proof to the Section~\ref{SavLa}.

\subsection{Iterated brackets in a free Lie algebra} Our goal is to give a set of axioms for Sabinin algebras in terms of the operations $(x_1,\ldots,x_n)$. 
For this we need one fact about the relations in the free Lie algebra.

If $[x_1,x_2]$ is a Lie bracket, we denote by $[x_1,\ldots,x_n]$ the iterated
right-normed Lie bracket $[x_1,[\ldots [x_{n-1},x_n]]]$.
Using the Jacobi identity (in this form it is also known as left Leibniz identity)
\begin{equation}\label{eq:jacobi-0}
[[x,y],z]=[x,[y,z]]-[y,[x,z]]
\end{equation}
any composition of Lie brackets can be expressed (``re-written'') via the right-normed iterated brackets. It is not hard to prove that such expression is unique; see  \cite[Lemma~1.3]{LP}.

Now, write
\begin{equation}\label{eq:jacobi}
[[x_1,\ldots,x_n],y]=[x_1,[[x_2,\ldots,x_n],y]]-
[[x_2,\ldots,x_n],[x_1,y]].
\end{equation}
Therefore,
\begin{equation}\label{eq:rewrite}
[[x_1,\ldots,x_n],y]=\sum_{\alpha}(-1)^{h(\alpha)}
[x_{\alpha_1},\ldots x_{\alpha_n}, y],
\end{equation}
where the summation is taken over all permutations $\alpha$ of the
set $\{1,\ldots,n\}$ for which there exists $s$ such that
$\alpha_i<\alpha_{i+1}$ for $i<s$, and $\alpha_i>\alpha_{i+1}$ for
$i\geq s$, and $h(\alpha)=n-s$. Clearly, for any such $\alpha$ we
have that $\alpha_s=n$.

Taking $y=[y_1,\ldots,y_m]$ we can re-write (\ref{eq:rewrite}) as
\begin{equation}\label{eq:rewrite2}
[[x_1,\ldots,x_n],[y_1,\ldots,y_m]]=\sum_{\alpha}(-1)^{h(\alpha)}
[x_{\alpha_1},\ldots x_{\alpha_n}, y_1,\ldots,y_m].
\end{equation}

One may ask what are the universal relations that hold among the
right-normed brackets. More precisely, let $\bigL$ be the free Lie
algebra in the generators $a_1,\ldots,a_n$. If some relation holds
in $\bigL$, then, expressing each term of this relation via right-normed
brackets we obtain a relation among the brackets $[a_{q_1},\ldots,
a_{q_r}]$.  In this sense, any relation among the brackets
$[a_{q_1},\ldots, a_{q_r}]$ is a consequence of the antisymmetry
and the Jacobi identity in $\bigL$.
\begin{lemma}
Any relation among the right-normed brackets $[a_{q_1},\ldots,
a_{q_r}]$ in a free Lie algebra is a consequence of antisymmetry relations in
$\bigL$ only.
\end{lemma}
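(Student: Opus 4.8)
The plan is to recast the lemma as a statement about the kernel of an evaluation map, and then to exploit the fact that the passage to right-normed form already ``uses up'' the Jacobi identity, so that antisymmetry remains the only genuine source of relations. Let $\mathcal{M}$ be the free non-associative (magma) algebra on $a_1,\ldots,a_n$, so that $\bigL=\mathcal{M}/J$, where $J$ is the ideal generated by the antisymmetry relators $[u,v]+[v,u]$ and by the left Leibniz relators $[[u,v],w]-[u,[v,w]]+[v,[u,w]]$ (in the presence of antisymmetry the latter are equivalent to the Jacobi relators). Let $W\subseteq\mathcal{M}$ be the subspace spanned by the right-normed monomials $[a_{q_1},\ldots,a_{q_r}]$, and let $\beta\colon W\to\bigL$ be the restriction of the quotient map. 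By the rewriting identity~(\ref{eq:rewrite}) the map $\beta$ is surjective, and $\ker\beta=W\cap J$ is exactly the space of relations among the right-normed brackets. Writing $K$ for the image under $\rho$ of the ideal generated by the antisymmetry relators---concretely, the span of the relations $[u,v]+[v,u]$ rewritten into right-normed form---one checks that $K\subseteq\ker\beta$, and the content of the lemma is the reverse inclusion $\ker\beta\subseteq K$.

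The tool that makes this work is the rewriting map $\rho\colon\mathcal{M}\to W$ that expresses an arbitrary bracket monomial in right-normed form by repeated application of~(\ref{eq:jacobi-0}). By \cite[Lemma~1.3]{LP} its value does not depend on the order in which the rewriting steps are performed, so $\rho$ is a well-defined linear projection with $\rho|_W=\mathrm{id}$. I would first isolate the one structural fact that drives everything: $\rho$ annihilates the entire ideal generated by the Leibniz relators. Indeed, $\rho$ is unchanged under a single Leibniz rewrite carried out inside an arbitrary context $C$, that is, $\rho\bigl(C([[u,v],w])\bigr)=\rho\bigl(C([u,[v,w]])\bigr)-\rho\bigl(C([v,[u,w]])\bigr)$; hence $\rho$ kills every contextual instance of a Leibniz relator, and these span the Leibniz ideal. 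By contrast $K$ is, by definition, $\rho$ of the antisymmetry ideal.

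With these facts the argument is immediate. The ideal $J$ is the sum of the antisymmetry ideal and the Leibniz ideal (the ideal generated by a union is the sum of the ideals generated by the parts). Given a relation $w\in\ker\beta=W\cap J$, decompose $w=\nu+\mu$ with $\nu$ in the antisymmetry ideal and $\mu$ in the Leibniz ideal. Since $w\in W$ we have $w=\rho(w)=\rho(\nu)+\rho(\mu)=\rho(\nu)$, and $\rho(\nu)\in K$. Therefore $w\in K$, which proves $\ker\beta=K$; in words, every relation among the right-normed brackets is a linear combination of antisymmetry relations expressed in right-normed form.

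The step I expect to be the main obstacle is the structural fact of the second paragraph, namely that $\rho$ annihilates the whole Leibniz ideal. This is where the confluence from \cite[Lemma~1.3]{LP} must be used at full strength: one needs that a Leibniz rewrite applied to an arbitrary sub-bracket, nested inside arbitrary surrounding brackets and with arbitrary (not necessarily right-normed) entries $u,v,w$, leaves the normal form $\rho$ unaffected. Making the notion of context precise, verifying that the contextual Leibniz relators really span the Leibniz ideal, and ensuring termination of the rewriting so that $\rho$ is defined at all, are the points that require care; the remainder of the proof is formal.
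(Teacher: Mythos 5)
Your proof is correct and is essentially the paper's argument in expanded form: the paper's entire proof is the observation that, by the uniqueness of the rewriting (\cite[Lemma~1.3]{LP}), the Jacobi identity transforms under rewriting into the trivial identity $0=0$, precisely because the right-hand side of (\ref{eq:jacobi-0}) is the rewriting of its left-hand side. Your \emph{key structural fact} (that $\rho$ annihilates the whole Leibniz ideal, including contextual instances) is exactly this observation, and the surrounding scaffolding --- the magma algebra, the decomposition of $J$ as the sum of the antisymmetry and Leibniz ideals, and the projection argument --- is a careful formalization of what the paper leaves implicit.
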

\begin{proof}

We need to show that the Jacobi identity (\ref{eq:jacobi-0}) by
re-writing transforms into the trivial identity $0=0$. This follows
from the uniqueness of the re-writing since the right-hand side of
(\ref{eq:jacobi-0}) is the re-writing of its left-hand side.

\end{proof}

\subsection{Sabinin algebras via Lie algebras}\label{SavLa}

Let $\lie$ be a Lie algebra, $\subalg\subset \lie$ --- a subalgebra and $\ll$
--- a vector subspace of $\lie$ such that $\lie=\subalg\oplus \ll$, with
$\pi$ the projection of $\lie$ to $\ll$ along $\subalg$ and
$$(x_1,\ldots,x_n)=\pi [x_1, \ldots ,x_n].$$

There are two kinds of relations satisfied by these operations. The
relations of the first kind reflect the fact that these operations
come from Lie brackets. Namely, the antisymmetry of the Lie bracket
gives
\begin{equation}\label{eq:one}
(x_{1},x_2)+(x_2,x_{1})=0,
\end{equation}
and the Jacobi identity translates into the following identity:
\begin{equation}\label{eq:two}
(x_{1},x_{2},x_3)+(x_{2},x_{3},x_{1})+ (x_{3},x_{1},x_{2})=0.
\end{equation}
The identities of the second kind express the fact that $\subalg$ is a
subalgebra and not just a vector subspace. If $u,v\in \lie$ are
arbitrary elements, then
$$\pi [\pi u -u, \pi v-v]=0.$$
Setting $u=[x_1,\ldots,x_n]$ and $v=[y_1,\ldots,y_m]$  we get
\begin{multline*}
\pi[[x_1,\ldots,x_n],[y_1,\ldots, y_m]]
+((x_1,\ldots,x_n),(y_1,\ldots,y_m) )\\
=((x_1,\ldots,x_n),\, y_1,\ldots,y_m)-((y_1,\ldots,y_m),\,
x_1,\ldots,x_n).
\end{multline*}
Using formula~(\ref{eq:rewrite2}), we obtain
\begin{multline}\label{eq:three}
-((x_1,\ldots,x_n),(y_1,\ldots,y_m) )+((x_1,\ldots,x_n),\,
y_1,\ldots,y_m)-((y_1,\ldots,y_m),\,
x_1,\ldots,x_n)\\
=\sum_{\alpha}(-1)^{h(\alpha)} (x_{\alpha_1},\ldots x_{\alpha_n},
y_1,\ldots, y_m).
\end{multline}
This last identity is derived from an equality that is trivial
unless  $n,m\geq 2$. Nevertheless, with the convention that $(x)=x$
the identity (\ref{eq:three}) is still valid and non-trivial for all
$n,m\geq 1$. Indeed, the case $n=1$ gives the antisymmetry relation
(\ref{eq:one}) and some of its consequences. Setting $m=1$, we
obtain the projection of formula~(\ref{eq:jacobi}). In particular,
$m=1$ and $n=2$ gives the Jacobi identity (\ref{eq:two}) after one
application of (\ref{eq:one}).

\begin{theorem}\label{thm:axioms}
Given a vector space $\ll$ with a set of multilinear operations
satisfying the identities (\ref{eq:three}) for all $n,m\geq 1$,
there exists a Lie algebra $\lie$ and a subalgebra $\subalg\subset \lie$ such
that $\lie=\subalg\oplus\ll$ and the operations on $\ll$ are projections of the
iterated Lie brackets in $\lie$ along $\subalg$.
\end{theorem}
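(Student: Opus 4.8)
The plan is to reverse the construction of the previous paragraphs: to realize $\lie$ as the free Lie algebra on the vector space $\ll$ and to produce $\subalg$ as the kernel of a projection $\pi\colon\lie\to\ll$ assembled directly from the given operations.

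Accordingly, let $\lie$ be the free Lie algebra on $\ll$, with $\ll$ identified with its degree-one component. Because the Jacobi identity rewrites every iterated bracket as a combination of right-normed ones, the brackets $[x_1,\ldots,x_r]$ with $x_i\in\ll$ span $\lie$, and I would define $\pi$ by
$$\pi[x_1,\ldots,x_r]=(x_1,\ldots,x_r),$$
multilinear in the entries. For $r=1$ this reads $\pi(x)=(x)=x$, so $\pi$ restricts to the identity on $\ll$; granting that $\pi$ is well defined, it is then a projection, $\lie=\ll\oplus\ker\pi$, and setting $\subalg=\ker\pi$ makes $\pi$ the projection along $\subalg$ with $(x_1,\ldots,x_n)=\pi[x_1,\ldots,x_n]$ by construction. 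Two things then remain: that $\pi$ is well defined and that $\subalg$ is a subalgebra.

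The well-definedness of $\pi$ is the main obstacle, since the right-normed brackets are linearly dependent. Writing $\rho\colon\bigoplus_{r\geq 1}\ll^{\otimes r}\to\lie$ for the rewriting map $x_1\otimes\cdots\otimes x_r\mapsto[x_1,\ldots,x_r]$ and $\widetilde{\pi}$ for the map sending the same tensor to $(x_1,\ldots,x_r)$, I must check that $\widetilde{\pi}$ annihilates $\ker\rho$. This is exactly where the preceding Lemma enters: it guarantees that $\ker\rho$ is spanned by the antisymmetry relations rewritten in right-normed form, and by multilinearity it suffices to take these for right-normed $u=[x_1,\ldots,x_n]$ and $v=[y_1,\ldots,y_m]$. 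Formula~(\ref{eq:rewrite2}) expresses $[u,v]$ and $[v,u]$ as tensors $\omega_{u,v}$ and $\omega_{v,u}$ with $\omega_{u,v}+\omega_{v,u}\in\ker\rho$; applying $\widetilde{\pi}$ and invoking~(\ref{eq:three}) once in each order, the two mixed terms $((x_1,\ldots,x_n),y_1,\ldots,y_m)$ and $((y_1,\ldots,y_m),x_1,\ldots,x_n)$ cancel and there remains $-((x_1,\ldots,x_n),(y_1,\ldots,y_m))-((y_1,\ldots,y_m),(x_1,\ldots,x_n))$, which vanishes by the binary antisymmetry~(\ref{eq:one})---itself the $n=1$ instance of~(\ref{eq:three}). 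Hence $\widetilde{\pi}(\ker\rho)=0$ and $\pi$ descends to $\lie$.

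Finally, $\subalg=\ker\pi$ must be a subalgebra, and here~(\ref{eq:three}) is used a second time, now essentially as a tautology: it is nothing but the condition $\pi[\pi u-u,\pi v-v]=0$ from which it was originally derived. Since $\pi$ is a projection, $\ker\pi$ is spanned by the elements $u-\pi u=[x_1,\ldots,x_n]-(x_1,\ldots,x_n)$, so by bilinearity it suffices to show $\pi[u-\pi u,\,v-\pi v]=0$ for $u=[x_1,\ldots,x_n]$ and $v=[y_1,\ldots,y_m]$. Expanding $[u-\pi u,\,v-\pi v]$ into four brackets and evaluating $\pi$ on each---reading the binary operation off $[\pi u,\pi v]$, the operation $((x_1,\ldots,x_n),y_1,\ldots,y_m)$ off $[\pi u,v]$, its counterpart off $[u,\pi v]$ after one use of antisymmetry, and~(\ref{eq:rewrite2}) off $[u,v]$---the four terms reassemble into the combination that~(\ref{eq:three}) asserts to vanish. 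Thus $[\subalg,\subalg]\subseteq\subalg$, and $\lie=\subalg\oplus\ll$ realizes the operations on $\ll$ as projections of iterated right-normed Lie brackets, as required.
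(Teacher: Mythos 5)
Your proposal is correct and takes essentially the same route as the paper: form the free Lie algebra on $\ll$, define $\pi$ on right-normed brackets by the given operations, invoke the Lemma (Jacobi rewrites to the trivial identity) to reduce well-definedness to the rewritten antisymmetry relations, which are disposed of by (\ref{eq:three}) together with the binary antisymmetry (\ref{eq:one}), and finally verify that $\subalg=\ker\pi$ is a subalgebra by another application of (\ref{eq:three}). Your write-up simply makes explicit the two steps the paper leaves terse (the kernel-of-rewriting argument and the four-term expansion of $\pi[u-\pi u,\,v-\pi v]$).
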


\begin{proof}
Let $a_1,\ldots,a_n,\ldots$ be a basis for $\ll$, and $\lie$ --- the free
Lie algebra generated by the $a_i$.

Define the projection $\pi:\lie\to \ll$ by sending the iterated bracket $[a_{i_1},\ldots,a_{i_k}]$ to $(a_{i_1},\ldots,a_{i_k})$.
To show that this is well-defined we have to verify that the
identities in the free Lie algebra $\lie$ are projected to identities
in $\ll$.

Since $\lie$ is free, all the identities among the elements
$[a_{i_{1}},\ldots,a_{i_k}]$ are consequences of the antisymmetry
and the Jacobi identity in $\lie$. Using (\ref{eq:rewrite2}), the
antisymmetry identity can be re-written as
$$\sum_{\alpha}(-1)^{h(\alpha)}
[x_{\alpha_1},\ldots x_{\alpha_n}, y_1,\ldots,
y_m]=-\sum_{\beta}(-1)^{h(\beta)} [y_{\beta_1},\ldots y_{\beta_m},
x_1,\ldots,x_n].$$ The corresponding identity for the operations in
$\ll$ is obtained directly from $(\ref{eq:three})$ using the
antisymmetry of the binary bracket on $\ll$.

We see that $\pi$ is well-defined. Set $\subalg=\ker{\pi}$. Clearly, $\subalg$
is spanned by elements of the form
$$[a_{i_{1}},\ldots,a_{i_k}]-(a_{i_{1}},\ldots,a_{i_k}).$$
It follows from (\ref{eq:three}) that the Lie bracket of two elements
of this form is projected by $\pi$ into $0$.
\end{proof}

We shall call the Lie envelope constructed in the proof of Theorem~\ref{thm:axioms} the {\em free Lie envelope of $\ll$}. Let $\m$ be the maximal of all the ideals of $\lie$ that are contained in $\subalg$. Then 
$$\lie/\m = \subalg/\m\oplus \ll$$
is a Lie envelope of $\ll$. We shall call it the {\em standard Lie envelope of $\ll$}.

\begin{proof}[Proof of Theorem~\ref{nilenvelope}]
It is obvious that a flat Sabinin algebra with a nilpotent Lie envelope is itself nilpotent. 

For the converse statement, let $\lie$ be the free Lie envelope of $\ll$. If $\ll$ is nilpotent of class $n$, all the operations $(x_1, \ldots, x_k)$ for $k>n$ are zero. In particular, all the iterated Lie brackets in $\lie$ of weight greater than $n$ lie in $\subalg$.
Since these Lie brackets generate the ideal $\lie_{n+1}$, we see that $\lie/\lie_{n+1}$ is a nilpotent Lie envelope for $\ll$ with
$$\lie/\lie_{n+1}=\subalg/\lie_{n+1}\oplus \ll.$$
The nilpotency class of $\lie/\lie_{n+1}$ is the same as that of $\ll$. 
\end{proof}

The proof of Theorem~\ref{nilenvelope} also shows that the standard Lie envelope of a nilpotent Sabinin algebra $\ll$ is nilpotent of the same class as $\ll$, since the ideal $\lie_{n+1}$ is contained in the ideal $\m$.

\subsection{The standard Lie envelope}
The standard Lie envelope can be characterized as follows:
\begin{proposition}
Let $\lie=\subalg\oplus\ll$ be a Lie envelope of $\ll$ such that
\begin{itemize}
\item $\ll$ generates $\lie$ as a Lie algebra; 
\item no non-trivial ideal of $\lie$ is contained in $\subalg$.
\end{itemize}
Then $\lie$ is isomorphic to the standard Lie envelope.
\end{proposition}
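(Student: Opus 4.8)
The plan is to exploit the universal property of the \emph{free} Lie envelope constructed in the proof of Theorem~\ref{thm:axioms}. Write $\lie_0=\subalg_0\oplus\ll$ for that free envelope, where $\lie_0$ is the free Lie algebra on a basis $a_1,a_2,\ldots$ of $\ll$, where $\pi_0:\lie_0\to\ll$ is the projection sending an iterated bracket $[a_{i_1},\ldots,a_{i_k}]$ to the operation $(a_{i_1},\ldots,a_{i_k})$, and where $\subalg_0=\ker\pi_0$; the standard envelope is $\lie_0/\m$ with $\m$ the largest ideal of $\lie_0$ contained in $\subalg_0$. Given the Lie envelope $\lie=\subalg\oplus\ll$ of the hypothesis, I would first produce a Lie algebra homomorphism $\phi:\lie_0\to\lie$. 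Since $\lie_0$ is free on the $a_i$ and each $a_i$ lies in $\ll\subseteq\lie$, the assignment $a_i\mapsto a_i$ extends uniquely to such a $\phi$, with no relations to verify.

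The key point is that $\phi$ intertwines the two projections, i.e. $\pi\circ\phi=\pi_0$, where $\pi$ is the projection of $\lie$ onto $\ll$. This is immediate on the iterated brackets: one has $\phi[a_{i_1},\ldots,a_{i_k}]=[a_{i_1},\ldots,a_{i_k}]$ in $\lie$, whose image under $\pi$ is the operation $(a_{i_1},\ldots,a_{i_k})$ by the very definition of the operations attached to $\lie$, and this equals $\pi_0[a_{i_1},\ldots,a_{i_k}]$ by the construction of the free envelope; since such brackets span $\lie_0$, the identity $\pi\circ\phi=\pi_0$ holds everywhere. Two consequences follow at once: $\phi(\subalg_0)=\phi(\ker\pi_0)\subseteq\ker\pi=\subalg$, and $\phi$ restricts to the identity on $\ll$ (which sits inside $\lie_0$ as the span of the $a_i$). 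Moreover $\phi$ is surjective, since its image is a subalgebra of $\lie$ containing $\ll$, and $\ll$ generates $\lie$ by the first hypothesis.

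It remains to identify $\ker\phi$ with $\m$, which yields the desired isomorphism $\lie_0/\m\cong\lie$. Set $K=\ker\phi$. On one hand $K\subseteq\m$: from $\pi_0=\pi\circ\phi$, any $x\in K$ satisfies $\pi_0(x)=\pi(\phi(x))=0$, so $K$ is an ideal of $\lie_0$ contained in $\subalg_0$ and hence in the maximal such ideal $\m$. On the other hand $\m\subseteq K$: because $\phi$ is surjective, $\phi(\m)$ is an ideal of $\lie$, and it lies in $\phi(\subalg_0)\subseteq\subalg$; the second hypothesis forbids non-trivial ideals of $\lie$ inside $\subalg$, forcing $\phi(\m)=0$, i.e. $\m\subseteq K$. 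Thus $K=\m$, and $\phi$ descends to an isomorphism $\bar\phi:\lie_0/\m\to\lie$.

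Finally I would check that $\bar\phi$ is an isomorphism of Lie \emph{envelopes}, not merely of Lie algebras: it is the identity on $\ll$ and carries $\subalg_0/\m$ into $\subalg$, and a short argument using the directness of both decompositions $\lie=\subalg\oplus\ll$ and $\lie_0/\m=\subalg_0/\m\oplus\ll$ shows $\bar\phi(\subalg_0/\m)=\subalg$. I expect the only genuinely delicate point to be the ideal bookkeeping in the penultimate step: ensuring that the claim ``$\phi(\m)$ is an ideal of $\lie$'' really invokes surjectivity of $\phi$, and that the two hypotheses are each used exactly once—generation of $\lie$ by $\ll$ for the surjectivity of $\phi$, and the absence of ideals inside $\subalg$ for the inclusion $\m\subseteq K$. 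Everything else is a routine diagram chase through the universal property of the free Lie algebra.
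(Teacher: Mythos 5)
Your proof is correct and follows essentially the same route as the paper: map the free Lie envelope onto $\lie$ via the universal property (surjectivity from the generation hypothesis), use the no-nontrivial-ideals hypothesis to kill the image of the maximal ideal $\m$, and conclude injectivity from the maximality of $\m$. You merely make explicit two points the paper leaves implicit — the intertwining $\pi\circ\phi=\pi_0$ and the identification $\ker\phi=\m$ — which is a welcome, not a divergent, elaboration.
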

\begin{proof}
Write $\lie_f=\subalg_f\oplus\ll$ for the free Lie envelope of $\ll$. If $\lie=\subalg\oplus\ll$ is a Lie envelope generated by $\ll$, there is a surjective homomorphism $\phi: \lie_f\to\lie$ which is identity on $\ll$. If $\m_f$ is the maximal of all the ideals of $\lie_f$ that are contained in $\subalg_f$, $\phi(\m_f)\subseteq\subalg$ is an ideal in $\lie$. In particular, $\phi$ vanishes on $\m_f$ and descends to a surjective homomorphism of the standard Lie envelope 
$$\phi':\lie_f/\m_f\to\lie.$$
Its kernel is contained in $\subalg_f/\m_f$ and, hence, must be trivial; therefore $\phi'$ is an isomorphism.
\end{proof}

For special classes of Sabinin algebras (Malcev algebras, Lie triple systems, Bol algebras) the standard Lie envelopes can be constructed using generalized derivations, see \cite{Sabinin_book}. 

\subsection{Multiplication Hopf algebras: the free case}
One general method of constructing Lie envelopes for Sabinin algebras is via the algebras of multiplications; we consider it in this and the following subsection.


Let $X$ be a set. We shall use the following notation:
\begin{itemize}
\item $W(X)$ -- the set of all non-empty words in $X$;
\item $\kk\{X\}$ -- the free unital non-associative algebra on $X$; 
\item $\Mlt = \Mlt\{X\}$ -- the associative algebra of self-maps of $\kk\{X\}$ generated by the left and right multiplications by all $z\in \kk\{X\}$.
\end{itemize}
In what follows, we shall write $L_z$ and $R_z$ for the operators of multiplication by $z$ on the left and on the right, respectively, in an arbitrary algebra. Thus, $\Mlt$ is the free associative algebra on the set $$\{L_w,R_w \mid w \in W(X)\}.$$
It is an associative Hopf algebra with 
\begin{eqnarray*}
\Delta(L_w) = \sum L_{w_{(1)}} \otimes L_{w_{(2)}} && \Delta(R_w) = \sum R_{w_{(1)}} \otimes R_{w_{(2)}}\\
\epsilon(L_w) = \epsilon(w) 1 && \epsilon(R_w) = \epsilon(w) 1 \\
S(L_w)\colon z \mapsto w\backslash z && S(R_w)\colon z \mapsto z/w
\end{eqnarray*}
for all $w \in W(X)\cup \{1\}$. Clearly,
\begin{displaymath}
\Delta(f(z)) = \sum f_{(1)}(z_{(1)}) \otimes f_{(2)}(z_{(2)}).
\end{displaymath}
As a Hopf algebra,  $\Mlt$ is cocommutative, pointed and irreducible.

Let  $\pi^L_+\colon \Mlt \rightarrow \Mlt$ be the linear map\footnote{A possibly confusing expression $f_{(1)}(1)$ means ``$f_{(1)}$ evaluated at $1$''.}
\begin{equation}\label{piplus}
\pi^L_+(f) = \sum  S(L_{f_{(1)}(1)}) f_{(2)},
\end{equation}
and define $\Mlt_+ = \Mlt\{X\}_+ = \pi^L_+(\Mlt)$.
\begin{proposition}
We have
\begin{enumerate}
\item $(\pi^L_+)^2 = \pi^L_+$;
\item  $\Mlt_+$ is the sum of all the subcoalgebras of $\Mlt$ contained in $\{ f \in \Mlt \mid f(1) = \epsilon(f) 1 \}$;
\item $\Mlt_+$ is a Hopf subalgebra of $\Mlt\{X\}$;
\item the map $L_{\kk\{X\}} \otimes \Mlt_+ \rightarrow \Mlt$ given by $L_z \otimes f \mapsto L_zf$ is a coalgebra isomorphism.
\end{enumerate}
\end{proposition}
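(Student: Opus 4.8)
The plan is to treat this as a Hopf-algebraic projection argument built on two auxiliary coalgebra maps. Write $H=\Mlt$ and introduce the evaluation $\varphi\colon H\to\kk\{X\}$, $\varphi(f)=f(1)$, and the left-multiplication map $L\colon\kk\{X\}\to H$, $z\mapsto L_z$. Setting $z=1$ in $\Delta(f(z))=\sum f_{(1)}(z_{(1)})\otimes f_{(2)}(z_{(2)})$ shows $\varphi$ is a coalgebra map, and $\Delta(L_z)=\sum L_{z_{(1)}}\otimes L_{z_{(2)}}$ shows the same for $L$; moreover $\varphi L=\mathrm{id}$. Hence $\ell=L\varphi$ is an idempotent coalgebra endomorphism of $H$ with image $L_{\kk\{X\}}$, restricting to the identity there, and the defining formula $(\ref{piplus})$ becomes the convolution $\pi^L_+=(S\ell)*\mathrm{id}_H$. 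The first thing I would record is the foundational identity $\pi^L_+(f)(1)=\epsilon(f)1$: evaluating the antipode axiom $\sum S(L_{w_{(1)}})L_{w_{(2)}}=\epsilon(w)1$ at the unit $1\in\kk\{X\}$ gives $\sum w_{(1)}\backslash w_{(2)}=\epsilon(w)1$, which is exactly what $\varphi\pi^L_+$ computes. Thus $\Mlt_+=\mathrm{im}\,\pi^L_+$ lands inside $E:=\{f\mid f(1)=\epsilon(f)1\}$.

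For (1) and (2) I would first note that, since $H$ is cocommutative, a convolution product of coalgebra maps is again a coalgebra map; applied to $\pi^L_+=(S\ell)*\mathrm{id}$ this makes $\pi^L_+$ a coalgebra map and $\Mlt_+$ a subcoalgebra. Idempotency then drops out: expanding $(\pi^L_+)^2(f)=\sum S(\ell(\pi^L_+(f_{(1)})))\,\pi^L_+(f_{(2)})$ and using $\ell(\pi^L_+(g))=L_{\epsilon(g)1}=\epsilon(g)1_H$ collapses the first factor to a counit. For (2), given any subcoalgebra $D\subseteq E$ and $d\in D$, each first Sweedler leg $d_{(1)}$ again lies in $D\subseteq E$, so $\ell(d_{(1)})=\epsilon(d_{(1)})1_H$ and therefore $\pi^L_+(d)=\sum\epsilon(d_{(1)})d_{(2)}=d$; hence $D\subseteq\Mlt_+$. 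As $\Mlt_+$ is itself a subcoalgebra contained in $E$, it is the sum of all such.

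Part (3) I would deduce from (2). First, $E$ is a subalgebra, since $\epsilon$ is multiplicative and $(fg)(1)=f(\epsilon(g)1)=\epsilon(g)f(1)$. Because the product of two subcoalgebras of a bialgebra is again a subcoalgebra, $\Mlt_+\cdot\Mlt_+$ is a subcoalgebra contained in $E$, hence contained in $\Mlt_+$ by (2); together with $\pi^L_+(1_H)=1_H$ this gives closure under multiplication. For the antipode, evaluating $\sum S(f_{(1)})f_{(2)}=\epsilon(f)1_H$ at $1$ and using $f_{(2)}(1)=\epsilon(f_{(2)})1$ for $f\in\Mlt_+$ yields $S(f)(1)=\epsilon(f)1$, so $S(\Mlt_+)\subseteq E$; as $S(\Mlt_+)$ is a subcoalgebra, (2) forces $S(\Mlt_+)\subseteq\Mlt_+$.

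Finally, for (4) the map is the restriction of the multiplication $m\colon H\otimes H\to H$, which is a coalgebra map by the bialgebra axioms, so $\mu\colon L_{\kk\{X\}}\otimes\Mlt_+\to H$ is a coalgebra homomorphism and only bijectivity remains. Surjectivity is the factorization $\ell*\pi^L_+=\mathrm{id}_H$ (a short Sweedler computation from the antipode axiom), which exhibits the explicit section $\psi(f)=\sum\ell(f_{(1)})\otimes\pi^L_+(f_{(2)})$ with $\mu\psi=\mathrm{id}_H$. The main obstacle is injectivity, i.e. checking $\psi\mu=\mathrm{id}$: one expands $\psi(cf)$, and the crucial simplification is that for $c\in L_{\kk\{X\}}$ and $f\in\Mlt_+\subseteq E$ one has $\ell(c\,f_{(1)})=\epsilon(f_{(1)})\,c$, because $f_{(1)}$ fixes $1$ up to $\epsilon$ and $\ell$ is the identity on $L_{\kk\{X\}}$. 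After this collapse the computation reduces to the antipode identity $\sum c_{(1)}\otimes S(c_{(2)})c_{(3)}=c\otimes 1_H$, giving $\psi(cf)=c\otimes f$. I expect this last verification --- keeping the Sweedler bookkeeping straight while repeatedly invoking $\Mlt_+\subseteq E$ --- to be the delicate step; everything else is formal.
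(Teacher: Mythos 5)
Your proof is correct, and its backbone coincides with the paper's: part (2) rests on the same fixed-point computation $\pi^L_+(d)=\sum \epsilon(d_{(1)})d_{(2)}=d$ for elements of a subcoalgebra contained in $E=\{f\mid f(1)=\epsilon(f)1\}$; multiplicative closure in (3) comes, as in the paper, from ``a product of subcoalgebras is a subcoalgebra'' plus (2); and surjectivity in (4) is the paper's identity $f=\sum L_{f_{(1)}(1)}\pi^L_+(f_{(2)})$, which you phrase as $\ell*\pi^L_+=\mathrm{id}$. You diverge in three worthwhile places. (i) You verify explicitly that $\Mlt_+\subseteq E$ (your ``foundational identity''), a fact the paper uses silently when asserting that $\Mlt_+$ is ``itself a subcoalgebra of this form''; likewise you deduce that $\pi^L_+$ is a coalgebra map from cocommutativity and the convolution presentation $\pi^L_+=(S\ell)*\mathrm{id}$, where the paper simply states $\Delta\circ\pi^L_+=(\pi^L_+\otimes\pi^L_+)\circ\Delta$, and you get (1) as a formal consequence rather than as a byproduct of (2). (ii) For closure under the antipode you evaluate $\sum S(f_{(1)})f_{(2)}=\epsilon(f)\,\mathrm{id}$ at $1$ and appeal to (2), which is tidier than the paper's explicit expansion of $S\left(\pi^L_+(M_{w_1}\cdots M_{w_n})\right)$; both routes use cocommutativity to know $S(\Mlt_+)$ is again a subcoalgebra. (iii) The genuine difference is injectivity in (4): you construct a two-sided inverse $\psi(f)=\sum\ell(f_{(1)})\otimes\pi^L_+(f_{(2)})$ and check $\psi\mu=\mathrm{id}$ via the collapse $\pi^L_+(L_vf)=\epsilon(v)f$ for $f\in\Mlt_+$, an argument that never leaves the Hopf formalism; the paper instead applies $\pi^L_+$ to a relation $f_0+L_{w_1}f_1+\cdots+L_{w_n}f_n=0$ to kill $f_0$, and then kills the remaining $f_i$ using freeness of $\kk\{X\}$ (left multiplications by distinct words of $W(X)$ are linearly independent). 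The paper's argument is shorter; yours costs more Sweedler bookkeeping but makes no appeal to the free structure, so it transfers verbatim to quotient situations such as the multiplication algebras of Section~\ref{multgen}, where the independence argument is unavailable.
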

\begin{proof}
Since $$\Delta \circ \pi^L_+ = \pi^L_+ \otimes \pi^L_+,$$ $\Mlt_+$ is a subcoalgebra of $\Mlt$. Any element $f$ in a subcoalgebra contained in  $$\{ f \in \Mlt \mid f(1) = \epsilon(f) 1 \}$$ satisfies 
$$\pi^+_L(f) =  \sum  S(L_{f_{(1)}(1)}) f_{(2)} = \epsilon(f_{(1)}) f_{(2)} = f,$$ which shows that all such subcoalgebras are contained in $\Mlt_+$. Since $\Mlt_+$ is itself a  subcoalgebra of this form, (2) is proved.

For the sake of convenience, write $M_z$ for either $L_z$ or $R_z$, and denote by $M'_z$ the operator $S(M_z)$. 
Given $w_1,\dots, w_n \in W(X)\cup\{1\}$ we have
$$
S\left(\pi^L_+(M_{w_1}\cdots M_{w_n})\right) = \sum M'_{w_{n(1)}} \cdots M'_{w_{1(1)}} L_{M_{w_{1(2)}}\cdots M_{w_{n(2)}}(1)}
$$
which shows that $S(\Mlt_+)$ is a subcoalgebra contained in $\{ f \in \Mlt \mid f(1) = \epsilon(f) 1 \}$ and, as a consequence, that $\Mlt_+$ is preserved by $S$. Since a product of subcoalgebras is again a subcoalgebra we conclude that $\Mlt_+$ is a Hopf subalgebra.

Any $f \in \Mlt$ can be written as $$f = \sum L_{f_{(1)}(1)}\pi^L_+(f_{(2)}).$$
The projection $\pi^L_+(L_{w}f)$ of $L_w f$ with $w \in W(X)$ and $f \in \Mlt_+$ vanishes since $$\pi^L_+(L_{w}f) = \sum S(L_{L_{w_{(1)}}f_{(1)}(1)})L_{w_{(2)}}f_{(2)} = \sum \epsilon(f_{(1)}) S(L_{w_{(1)}})L_{w_{(2)}}f_{(2)} = \epsilon(w)f = 0.$$ Therefore, given a linear combination  $f_0 + L_{w_1}f_1 + \cdots + L_{w_n}f_n = 0$ with distinct $w_1,\dots, w_n \in W(X)$  it is sufficient to apply  $\pi^L_+$ to get  $f_0 = 0$. Moreover, since in $\kk\{X\}$ the equality 
$w_1 a_1 +\ldots +w_n a_n =0$ with distinct $w_1,\dots, w_n \in W(X)$ implies that all $a_i=0$, we have $f_1 = \cdots = f_n = 0$. This proves (4).
\end{proof}

A similar statement can be proved for right multiplications; one only needs to replace  the map $\pi^L_+$ by  $\pi^R_+ \colon f \mapsto \sum S(R_{f_{(1)}(1)}) f_{(2)}$.

\medskip

Primitive elements of $\Mlt$ y $\Mlt_+$ form Lie algebras $\PMlt =\PMlt\{X\}$ and $\PMlt_+ = \PMlt\{X\}_+$, respectively. Clearly,
\begin{displaymath}
\PMlt = L_{\Prim\kk\{X\}} \oplus \PMlt_+ =  R_{\Prim\kk\{X\}} \oplus \PMlt_+.
\end{displaymath}

\medskip

Being a coalgebra, $\kk\{X\}$ carries the co-radical filtration $\kk\{X\} = \cup_{i\geq 0} \kk\{X\}_i$, where $\kk\{X\}_i$ is the subspace spanned by the products of at most $i$ primitive elements, which can have arbitrarly high degree in $X$. Since $\Prim \kk\{X\}$  is the free Sabinin algebra on $X$, we shall use the notation $\Sab\{X\}$ for it.  The vector space $L_{\Sab\{X\}}$  can be identified with $\Sab\{X\}$. Being a complement to a Lie subalgebra in a Lie algebra it carries the structure of a flat Sabinin algebra. We shall prove in the end of this section that this structure coincides with the Mikheev-Sabinin brackets on $\Sab\{X\}$.

\begin{proposition}
$\Mlt_+$ preserves the co-radical fitration on $\kk\{X\}$. 
\end{proposition}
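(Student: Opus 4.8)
The plan is to work with the intrinsic description of the co-radical filtration rather than with products of primitives directly. Write $\overline{\kk\{X\}}=\ker\epsilon$ for the augmentation ideal and let $\overline{\Delta}\colon\overline{\kk\{X\}}\to\overline{\kk\{X\}}\otimes\overline{\kk\{X\}}$, $\overline{\Delta}(c)=\Delta(c)-c\otimes 1-1\otimes c$, be the reduced coproduct, which is coassociative. Since $\kk\{X\}$ is pointed irreducible with coradical $\kk\,1$, its co-radical filtration is recovered as $\kk\{X\}_n=\kk\,1\oplus\ker\overline{\Delta}^{(n)}$, where $\overline{\Delta}^{(n)}\colon\overline{\kk\{X\}}\to\overline{\kk\{X\}}^{\otimes(n+1)}$ is the $n$-fold iterate of $\overline{\Delta}$. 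Granting this, it suffices to prove that each $f\in\Mlt_+$ satisfies $f(\ker\overline{\Delta}^{(n)})\subseteq\ker\overline{\Delta}^{(n)}$ for every $n$, since $f(\kk\,1)\subseteq\kk\,1$ follows at once from $f(1)=\epsilon(f)1$. Note also that $\epsilon(f(c))=\epsilon(f)\epsilon(c)$, so $f$ maps $\overline{\kk\{X\}}$ into itself.

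The crux is the identity, valid for every $f\in\Mlt_+$ and $c\in\overline{\kk\{X\}}$,
$$\overline{\Delta}\big(f(c)\big)=\sum f_{(1)}(c')\otimes f_{(2)}(c''),\qquad\text{where}\quad \overline{\Delta}(c)=\sum c'\otimes c''.$$
I would obtain this by expanding the compatibility relation $\Delta(f(c))=\sum f_{(1)}(c_{(1)})\otimes f_{(2)}(c_{(2)})$ after substituting $\Delta(c)=c\otimes 1+1\otimes c+\overline{\Delta}(c)$. The two groups of terms coming from $c\otimes 1$ and $1\otimes c$ collapse, using the counit axiom, to exactly $f(c)\otimes 1$ and $1\otimes f(c)$, which are precisely the terms removed in passing from $\Delta$ to $\overline{\Delta}$. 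This collapse is where the defining property of $\Mlt_+$ is used in an essential way: because $\Mlt_+$ is a subcoalgebra, all the Sweedler components $f_{(1)},f_{(2)}$ again lie in $\Mlt_+$ and therefore satisfy $f_{(i)}(1)=\epsilon(f_{(i)})1$, which is exactly what makes $f_{(1)}(c)\otimes f_{(2)}(1)$ reduce to $f(c)\otimes 1$ and similarly for the other slot.

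With the base identity established, an induction on $n$ gives
$$\overline{\Delta}^{(n)}\big(f(c)\big)=\sum f_{(1)}(c^{(1)})\otimes\cdots\otimes f_{(n+1)}(c^{(n+1)}),$$
where $\Delta^{(n)}(f)=\sum f_{(1)}\otimes\cdots\otimes f_{(n+1)}$ and $\overline{\Delta}^{(n)}(c)=\sum c^{(1)}\otimes\cdots\otimes c^{(n+1)}$; the inductive step just reapplies the base identity to $\overline{\Delta}^{(n)}(f_{(1)}(c'))$, which is permitted since every $f_{(1)}$ lies in $\Mlt_+$, and then reassembles the Sweedler components using coassociativity of $\Delta$ on $\Mlt$ and of $\overline{\Delta}$ on $\overline{\kk\{X\}}$. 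In particular $\overline{\Delta}^{(n)}(c)=0$ forces $\overline{\Delta}^{(n)}(f(c))=0$, so $f$ preserves every $\kk\{X\}_n$, as required.

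The step I expect to be the main obstacle is the bookkeeping in the base identity: one must check carefully that, after expanding, the terms carrying a $1$ in one tensor factor recombine (via the counit and the property $f_{(i)}(1)=\epsilon(f_{(i)})1$) into exactly the two terms subtracted in the definition of $\overline{\Delta}$, with nothing left over. The iteration is then purely formal coassociativity. A secondary point worth recording is the equivalence of the two descriptions of $\kk\{X\}_n$ --- the span of products of at most $n$ primitive elements and the kernel $\ker\overline{\Delta}^{(n)}$ --- which is the standard characterization of the co-radical filtration of a connected coalgebra and is what licenses the reduction in the first paragraph. Alternatively, one could invoke that $\Mlt_+$ is generated as an algebra by its primitive elements $\PMlt_+$ and check that each primitive multiplication acts as a coderivation for $\overline{\Delta}$, hence preserves the filtration; the direct computation above avoids this and needs no structure theorem.
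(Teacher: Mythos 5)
Your proof is correct, but it follows a genuinely different route from the paper's. The paper first reduces the statement to primitive operators: it suffices to check that $\PMlt_+$ preserves the filtration, since every element of $\Mlt_+$ is a linear combination of compositions of elements of $\PMlt_+$ (a structure-theoretic fact, stated and used again right after this proposition). For a primitive $f$ the compatibility $\Delta(f(z))=\sum f_{(1)}(z_{(1)})\otimes f_{(2)}(z_{(2)})$ collapses to the coderivation identity $\Delta(f(z))=\sum f(z_{(1)})\otimes z_{(2)}+z_{(1)}\otimes f(z_{(2)})$, and an induction on the filtration degree finishes the argument. You avoid the reduction entirely: working with an arbitrary $f\in\Mlt_+$, you use the same two ingredients --- the compatibility of $\Delta$ with the action, and the property $g(1)=\epsilon(g)1$ applied to the Sweedler components $f_{(i)}$, which is legitimate precisely because $\Mlt_+$ is a subcoalgebra contained in $\{g\mid g(1)=\epsilon(g)1\}$ --- to show that the action of $\Mlt_+$ intertwines the reduced coproduct, and then iterate and invoke the kernel characterization $\kk\{X\}_n=\kk 1\oplus\ker\overline{\Delta}^{(n)}$ of the coradical filtration of a pointed irreducible coalgebra. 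What your version buys: no appeal to the generation of $\Mlt_+$ by its primitives, hence no structure theorem (and no implicit characteristic-zero input at this step). What the paper's version buys: much lighter Sweedler bookkeeping, since a primitive operator is simply a coderivation. One caveat applies equally to both arguments, and you flag it correctly: the paper \emph{defines} $\kk\{X\}_i$ as the span of products of at most $i$ primitive elements, so either proof ultimately rests on identifying this filtration with the intrinsic coradical filtration (equivalently, with $\kk 1\oplus\ker\overline{\Delta}^{(i)}$); the paper takes this identification for granted through its terminology, and you are entitled to the same convention, so this is not a gap relative to the paper.
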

\begin{proof}
It is sufficient to show that $\PMlt_+$ preserves the co-radical filtration. For $f \in \PMlt_+$ and $a \in \Sab\{X\}$ we have $$f(1) = \epsilon(f) 1 = 0$$ and $$\Delta(f(a)) = 
\sum f_{(1)}(a_{(1)}) \otimes f_{(2)}(a_{(2)}) = f(a) \otimes 1 + 1 \otimes f(a).$$ Assuming that $f$ preserves the co-radical filtration up to degree $k$ and using the identity
 $$\Delta(f(z)) = \sum f_{(1)}(z_{(1)}) \otimes f_{(2)}(z_{(2)}) = \sum f(z_{(1)}) \otimes z_{(2)} + z_{(1)}\otimes f(z_{(2)})$$ we see that $f$ preserves the co-radical filtration up to degree $k+1$.
\end{proof}
We stress that, since applying repeatedly $f_1,f_2,\dots \in \PMlt_+$ to $z \in \kk\{X\}_i$ produces  a linear combination of monomials which involve at most $i$ primitive elements, these elements must be of increasing degrees in $X$.

\medskip

Since each operator in $\Mlt_+$  is a linear combination of (possibly empty) compositions of operators in $\PMlt_+$, the following result shows that the Lie algebra $\PMlt_+$ encodes the Shestakov-Umirbaev operations in $\Sab\{X\}$:

\begin{proposition}
We have $\Sab\{X\} = \spann\langle f(x) \mid f \in \Mlt_+,\,  x \in X \rangle $. In fact,
\begin{displaymath}
\Sab\{X\}_{m+1} = \spann\langle f_1 \cdots f_j(x) \mid f_1,\dots, f_j \in \PMlt_+,\, j\geq m, \, x \in X\rangle.
\end{displaymath}
\end{proposition}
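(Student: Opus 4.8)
The plan is to prove the two claims together, since the first ($\Sab\{X\} = \spann\langle f(x) \mid f \in \Mlt_+, x \in X\rangle$) is essentially the $m=0$ case of the graded statement, and to argue by induction on $m$ using the co-radical filtration. The key structural input is the decomposition $\PMlt = L_{\Sab\{X\}} \oplus \PMlt_+$, which says that every primitive multiplication operator splits into a left-multiplication-by-a-primitive part and a part in $\PMlt_+$. First I would set up the correspondence $x \mapsto L_x$ for $x \in \Sab\{X\}$ and observe that $L_x(1) = x$, so elements of $\Sab\{X\}$ are already hit by operators applied to $1$; more precisely, for $x \in X$ one has $f(x)$ with $f$ ranging over $\Mlt_+$, and I want to show these span all of $\Sab\{X\}$.

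The heart of the argument is to relate the co-radical filtration degree to the number of factors from $\PMlt_+$ needed to produce a given primitive element. I would show the inclusion $\spann\langle f_1 \cdots f_j(x) \mid f_i \in \PMlt_+,\, j \geq m,\, x \in X\rangle \subseteq \Sab\{X\}_{m+1}$ first, which is the \emph{easy direction}: each $f_i \in \PMlt_+$ preserves the co-radical filtration (by the preceding proposition) and, as stressed in the remark following it, applying operators from $\PMlt_+$ to a single generator $x \in X \subset \Sab\{X\}_1$ cannot raise the co-radical degree, so the output lies in $\Sab\{X\}_1 \subseteq \Sab\{X\}_{m+1}$. In fact every $f_1\cdots f_j(x)$ is primitive (each $f_i$ sends primitives to primitives), so it lands in $\Sab\{X\} = \Sab\{X\}_1$, and the filtration-degree bookkeeping in the claim is really about the reverse containment.

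For the \emph{reverse direction} I would argue by induction on the co-radical degree. Given a primitive element $z \in \Sab\{X\}$, I would use the defining identity (\ref{defShU}) of the Shestakov-Umirbaev operations together with the splitting $\PMlt = L_{\Sab\{X\}} \oplus \PMlt_+$ to rewrite $z$ in terms of operators from $\PMlt_+$ applied to elements of strictly smaller degree, peeling off one $\PMlt_+$-factor at each step. Concretely, any primitive element of weight $> 1$ arises as a Shestakov-Umirbaev operation $p(\underline u; \underline v; w)$, and the inductive identity (\ref{defShU}) expresses the associator-type defect as $u_{(1)}v_{(1)} \cdot p(\underline u_{(2)}; \underline v_{(2)}; w)$; recognizing the operator that produces $p(\cdots; w)$ from $w$ as lying in $\Mlt_+$, and decomposing it into primitives via the splitting, lets me write $z$ as a sum of terms $f_1 \cdots f_j(x)$ with the requisite number of factors. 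The \textbf{main obstacle} will be this bookkeeping: verifying that the operator extracting a Shestakov-Umirbaev bracket genuinely lands in $\PMlt_+$ (not merely in $\Mlt$), and that decomposing it into a product of primitives using $\PMlt = L_{\Sab\{X\}} \oplus \PMlt_+$ increases the $\PMlt_+$-factor count by exactly what is needed to match the co-radical degree jump. This is where the identity $f(1) = \epsilon(f)1$ characterizing $\Mlt_+$, the projection formula (\ref{piplus}), and the compatibility of the coproduct with evaluation all have to be combined carefully.
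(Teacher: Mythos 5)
There is a genuine gap, and it starts with a misreading of the statement. The space $\Sab\{X\}_{m+1}$ is the $(m+1)$-st term of the lower central series of the free Sabinin algebra as defined in the Introduction --- the span of all brackets of weight at least $m+1$, equivalently the span of the homogeneous primitive elements of degree at least $m+1$ in $X$. It is unrelated to the co-radical filtration of $\kk\{X\}$: every primitive element has co-radical degree one, so your plan to ``relate the co-radical filtration degree to the number of factors from $\PMlt_+$'' measures nothing, and your ``induction on the co-radical degree'' of a primitive element cannot even begin. The misreading propagates into your ``easy direction'': the inclusion you invoke, $\Sab\{X\}_1 \subseteq \Sab\{X\}_{m+1}$, is backwards, since the filtration is decreasing ($\Sab\{X\}_{m+1}\subseteq \Sab\{X\}_1=\Sab\{X\}$). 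What you actually establish there is only that $f_1\cdots f_j(x)$ is primitive, which says nothing about its filtration degree. The correct argument for that direction uses a different pair of facts: each element of $\PMlt_+$ raises the $X$-degree by at least one (it is a polynomial with no constant term in the operators $L_w,R_w$ with $w$ a nonempty word, and it kills $1$), so $f_1\cdots f_j(x)$ is a primitive element of degree at least $j+1\geq m+1$; and homogeneous primitives of degree $d$ are spanned by weight-$d$ brackets of the generators, because $\Prim\kk\{X\}$ is the free Sabinin algebra on $X$.

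For the reverse inclusion, the step you flag as the ``main obstacle'' --- that the operator extracting $p(\underline{u};\underline{v};\cdot)$ from its last argument lies in $\Mlt_+$ --- is not a loose end to be handled via the splitting $\PMlt = L_{\Sab\{X\}}\oplus\PMlt_+$; it is essentially the whole content of the paper's proof, and the paper settles it with a tool you never invoke: part (2) of the structure proposition for $\Mlt_+$, which characterizes $\Mlt_+$ as the sum of all subcoalgebras of $\Mlt$ contained in $\{f \mid f(1)=\epsilon(f)1\}$. Concretely, the paper checks that the spans of $\bigl\{1,\ \sum S(L_{x_{(1)}y_{(1)}})L_{x_{(2)}}L_{y_{(2)}}\bigr\}$ (these operators realize $-p(\underline{x};\underline{y};\cdot)$ up to a multiple of the identity, by the identity (\ref{defShU})) and of $\{1,\ L_a-R_a \mid a\in\Sab\{X\}\}$ are subcoalgebras of this kind, hence lie in $\Mlt_+$, and then the claim follows from the definition of the Shestakov--Umirbaev operations. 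Note also that your sketch works only with the operations $p$: the assertion that ``any primitive element of weight $>1$ arises as a Shestakov--Umirbaev operation $p(\underline{u};\underline{v};w)$'' is false already for the commutator $\langle x,y\rangle=-xy+yx$, which is why the paper needs the family $L_a-R_a$ as a second, separate family of operators; without it even the first claim $\Sab\{X\}=\spann\langle f(x)\rangle$ is out of reach. So, as written, your proposal assumes exactly what the paper proves, and the part you do argue rests on an inverted inclusion and the wrong filtration.
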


\begin{proof}
The spaces spanned by $$\left\{ 1, \sum S(L_{x_{(1)}y_{(1)}})L_{x_{(2)}}L_{y_{(2)}} \mid x,y \in \kk\{X\}\right\}$$ and $$\{1,  L_a - R_a \mid a \in \Sab\{X\}\}$$ are subcoalgebras whose elements  $f$ satisfy $f(1) = \epsilon(f) 1$; as a consequence, they are contained in $\Mlt_+$. The result now follows from the definition of the Shestakov-Umirbaev primitive operations.
\end{proof}

Now, let $\pi = \mathrm{Id} - \pi^L_+$ be the projection of $\PMlt$ to $L_{\Sab\{X\}}$. Define the Mikheev-Sabinin brackets on $L_{\Sab\{X\}}$ by the formulae (\ref{pi}) and (\ref{brackets}). 
\begin{proposition}\label{coincidence}
With the natural identification of $L_{\Sab\{X\}}$ with $\Sab\{X\}$, the Mikheev-Sabinin brackets on $L_{\Sab\{X\}}$ coincide with the Mikheev-Sabinin brackets on $\Sab\{X\}$. 
\end{proposition}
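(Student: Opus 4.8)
The plan is to compare the two flat Sabinin algebra structures through the plain operations $(x_1,\ldots,x_n)$ of \eqref{pi} rather than through the brackets directly. The point is that \eqref{brackets} expresses $\langle x_1,\ldots,x_n;y,z\rangle$ in terms of operations $(\cdots)$ and of brackets $\langle\cdots\rangle$ having strictly fewer slots before the semicolon; hence, for a fixed family of operations $(\cdots)$, it determines all Mikheev--Sabinin brackets recursively in the number of the $x_i$. Since the brackets on $L_{\Sab\{X\}}$ are \emph{defined} by \eqref{pi} and \eqref{brackets} from the operations $(x_1,\ldots,x_n)=\pi[L_{x_1},[\ldots[L_{x_{n-1}},L_{x_n}]]]$, it suffices to prove that the Shestakov--Umirbaev brackets satisfy \eqref{brackets} when the operations $(a_1,\ldots,a_k)$ occurring in it are interpreted as $[L_{a_1},[\ldots[L_{a_{k-1}},L_{a_k}]]](1)$ (the last slot being allowed to be a bracket, regarded as an element of $\Sab\{X\}$). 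The base case $n=0$ reads $(y,z)+\langle y,z\rangle_{SU}=0$, and indeed $[L_y,L_z](1)=yz-zy=-\langle y,z\rangle_{SU}$ by \eqref{defMS}.

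First I would make the projection explicit. For $f\in\PMlt$ the identity $\Delta(f(z))=\sum f_{(1)}(z_{(1)})\otimes f_{(2)}(z_{(2)})$ shows that $f(1)$ is primitive, so $f-L_{f(1)}$ is primitive and annihilates $1$; by the characterization of $\Mlt_+$ this means $f-L_{f(1)}\in\PMlt_+$. Hence $\PMlt_+=\{f\in\PMlt\mid f(1)=0\}$, the decomposition $\PMlt=L_{\Sab\{X\}}\oplus\PMlt_+$ is exactly $f=L_{f(1)}+(f-L_{f(1)})$, and under the identification $L_a\leftrightarrow a$ the projection $\pi$ is simply evaluation at $1$. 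In particular $(x_1,\ldots,x_n)=[L_{x_1},[\ldots[L_{x_{n-1}},L_{x_n}]]](1)$.

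Next I would compute these operations by induction, using $[L_{x_1},g](1)=x_1\,g(1)-g(x_1)$ for any $g\in\Mlt$. Taking $g=[L_{x_2},[\ldots,L_{x_n}]]$, which is primitive, and writing $g=L_{g(1)}+(g-L_{g(1)})$ with $g-L_{g(1)}\in\PMlt_+$ and $g(1)=(x_2,\ldots,x_n)$, one gets
\[
(x_1,\ldots,x_n)=x_1\,(x_2,\ldots,x_n)-(x_2,\ldots,x_n)\,x_1-\bigl(g-L_{g(1)}\bigr)(x_1).
\]
The first two terms form the binary operation $\bigl(x_1,(x_2,\ldots,x_n)\bigr)$, precisely the type of term built from lower brackets that appears in the sum of \eqref{brackets}; the whole arithmetic content of the proposition is therefore concentrated in the last term, the action of the $\PMlt_+$-component of an iterated $\mathrm{ad}$ on a generator. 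By the preceding proposition this action is a linear combination of Shestakov--Umirbaev operations, and I would evaluate it through the defining identity \eqref{defShU}, using $L_z(w)=zw$ and the coalgebra identity for $\Delta(f(z))$ to convert nested commutators of left multiplications into the Sweedler expression $u_{(1)}v_{(1)}\cdot p(\underline u_{(2)};\underline v_{(2)};z)$. Comparison with \eqref{defMS} then identifies the brackets produced by \eqref{brackets} with the Shestakov--Umirbaev brackets; the weight-three check reproduces $-p(x_1;x_2;x_3)+p(x_1;x_3;x_2)$, which is reassuring.

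The main obstacle is exactly this last matching: tracking the Hopf-algebraic bookkeeping so that the reassembly prescribed by \eqref{brackets} of the iterated commutators $[L_{x_1},\ldots,L_{x_n},L_y,L_z](1)$ yields term-by-term the Sweedler combination defining $p(\underline u;\underline v;z)$. The two delicate points are the correct treatment of $p$ when a Sweedler component equals $1$ (the degenerate low-weight contributions) and keeping the prefix/suffix splitting of the sequences $\underline u,\underline v$ consistent with the nesting order of the $\mathrm{ad}$'s. Once these conventions are fixed, the inductive step is a purely formal computation driven by \eqref{defShU}, and no further geometric input beyond the structure of $\PMlt_+$ is required.
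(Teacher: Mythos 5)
Your structural work is correct, and it is essentially the paper's own argument in a cleaner form: the characterization $\PMlt_+=\{f\in\PMlt \mid f(1)=0\}$, hence $\pi(f)=L_{f(1)}$ for primitive $f$, is exactly what the paper extracts from \eqref{piplus} when it computes $\pi^L_+(L_yL_z-L_zL_y)=-L_{[y,z]}+L_yL_z-L_zL_y$; your base case coincides with the paper's, and the reduction to showing that the Shestakov--Umirbaev brackets satisfy the recursion \eqref{brackets} (which determines all brackets uniquely from the operations \eqref{pi}) is the right frame. Your derivation that the bracket produced by \eqref{brackets} on $L_{\Sab\{X\}}$ is precisely $L_{(g-L_{g(1)})(x_1)}$, with $g$ the inner iterated commutator, is also correct and is a useful clarification that the paper does not state.

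The gap is that the inductive step, which you defer as ``a purely formal computation driven by \eqref{defShU}'', is never performed --- and it is not formal: it is the entire content of the proposition. Your own reduction makes the difficulty visible. The term $(g-L_{g(1)})(x_1)$ applies an operator built from $L_{x_2},\dots,L_{x_n},L_y,L_z$ to the argument $x_1$, so every monomial it produces carries $x_1$ in the innermost right-hand slot, whereas by \eqref{defShU} the operations $p(x_1,\dots;\,\cdot\,;\,\cdot\,)$ place the $x_i$ as left factors. Concretely, for $n=1$ and $g=[L_y,L_z]$ one finds
\begin{equation*}
(g-L_{g(1)})(x)=y(zx)-z(yx)-(yz)x+(zy)x, \qquad -p(x;y;z)+p(x;z;y)=-(xy)z+x(yz)+(xz)y-x(zy),
\end{equation*}
two primitive elements supported on disjoint sets of monomials of the free algebra $\kk\{X\}$. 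So the ``reassuring'' weight-three check you claim does not close by Sweedler bookkeeping with the conventions as stated; equating the two sides requires a genuine identity, equivalently a careful resolution of the left-versus-right asymmetry (whether the section is by left or right multiplications, and the normalization and argument placement in \eqref{pi} and \eqref{brackets}), and your outline provides no mechanism for this. To be fair, the paper is laconic at exactly the same point --- it declares the induction step ``practically identical to the case $n=0$'' after stating the inductive claim $\langle L_{x_1},\dots,L_{x_n};L_y,L_z\rangle=L_{\langle x_1,\dots,x_n;y,z\rangle}$ --- but a proof proposal that stops where the actual matching begins, and moreover asserts that a computation ``reproduces'' an expression which, carried out literally, produces its mirror image, has to be counted as having a genuine gap.
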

In particular, $\PMlt$  is a Lie envelope for $L_{\Sab\{X\}}$. Note that while $L_{\Sab\{X\}}$ is flat, $\Sab\{X\}$ is not.
\begin{proof}
First note that by (\ref{piplus}) $$\pi^L_+ (L_y L_z - L_z L_y) = - L_{[y,z]} + L_y L_z - L_z L_y$$
for $y,z\in\Sab\{X\}$ since the commutator of two primitive elements is primitive. Hence, from (\ref{brackets}) we have
$$\langle L_y, L_z \rangle = - (L_y, L_z) = -\pi (L_y L_z- L_z L_y) = -  (L_y L_z- L_z L_y) + \pi^L_+ (L_y L_z - L_z L_y) = - L_{[y,z]} = L_{\langle y, z \rangle}.$$
Now one can use the induction on $n$ to show that, in general,
$$\langle L_{x_1},\ldots L_{x_n}; L_y, L_z \rangle = L_{\langle {x_1},\ldots {x_n}; y, z \rangle}$$
for $x_i, y,z$ in $\Sab\{X\}$, the induction step being practically identical to the case $n=0$.
\end{proof}

\subsection{Multiplication Hopf algebras: the general case}\label{multgen}
Let now $\ll$ be a not necessarily free Sabinin algebra and $X$ a set of Sabinin algebra generators for $\ll$. The natural epimorphism of Hopf algebras  $$e: \kk\{X\} \rightarrow U(\ll)$$ can be extended to an epimorphism of unital algebras
$$e:\Mlt \rightarrow \Mlt(\ll)$$
 by 
 $$L_w \mapsto L_{e(w)}\quad\text{and}\quad R_w \mapsto R_{e(w)}$$
for all $w \in W(X)$; here $\Mlt(\ll)$ denotes the multiplication algebra of $U(\ll)$. Clearly, $e(f(z)) = e(f)(e(z))$ and, hence,
\begin{displaymath}
\ll_{m+1} = e(\Sab\{X\}_{m+1}) = \spann\langle e(f_1)\cdots e(f_j)(x) \mid f_1,\dots, f_j \in \PMlt_+, \, j\geq m,\, x\in\ll\rangle.
\end{displaymath} 

In the same way we define $\Mlt_+(\ll), \PMlt(\ll)$ and  $\PMlt_+(\ll)$ as the images under $e$ of $\Mlt$, $\PMlt$ and $\PMlt_+$ respectively. These algebras do not depend on the generating set $X$. It is clear that
\begin{displaymath}
\PMlt(\ll) = L_\ll \oplus \PMlt_+(\ll).
\end{displaymath}
The Lie algebra $\PMlt_+(\ll)$ preserves the co-radical filtration of $U(\ll)$ since this filtration is the image of the corresponding filtration on $\kk\{X\}$. It follows from Proposition~\ref{coincidence} that $\PMlt(\ll)$ is a Lie envelope for the Mikheev-Sabinin brackets of $\ll$.

\medskip

In the case of nilpotent Sabinin algebras there is a special kind of generating sets.
\begin{proposition}
A nilpotent Sabinin algebra  $\ll$ is generated by any basis in a complement of  $\ll_2$ in $\ll$.
\end{proposition}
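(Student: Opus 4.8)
The plan is to show that the Sabinin subalgebra $\mathcal{T}$ generated by the chosen basis $B$ of a complement $V$ to $\ll_2$ (so that $\ll=V\oplus\ll_2$ and $V=\spann B\subseteq\mathcal{T}$) already equals $\ll$, using nilpotency to push an arbitrary element into $\mathcal{T}$ modulo ever higher terms of the filtration $\ll=\ll_1\supseteq\ll_2\supseteq\cdots$. Since $\ll_{n+1}=0$, everything reduces to the inclusion $\ll_m\subseteq\mathcal{T}+\ll_{m+1}$ for every $m\geq 1$. Granting this, a downward induction finishes the argument at once: $\ll_n\subseteq\mathcal{T}+\ll_{n+1}=\mathcal{T}$, then $\ll_{n-1}\subseteq\mathcal{T}+\ll_n\subseteq\mathcal{T}$, and so on down to $\ll_1=\ll\subseteq\mathcal{T}$, whence $\mathcal{T}=\ll$.

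The inclusion $\ll_m\subseteq\mathcal{T}+\ll_{m+1}$ I would establish by an upward induction on $m$. The base case $m=1$ reads $\ll\subseteq\mathcal{T}+\ll_2$, which is immediate from $\ll=V\oplus\ll_2$ and $V\subseteq\mathcal{T}$. For the inductive step, note first that the hypothesis at level $m'$ has a convenient refined form: given $a\in\ll_{m'}$, writing $a=t+r$ with $t\in\mathcal{T}$ and $r\in\ll_{m'+1}\subseteq\ll_{m'}$ shows $t=a-r\in\mathcal{T}\cap\ll_{m'}$, so every element of $\ll_{m'}$ is a sum of an element of $\mathcal{T}\cap\ll_{m'}$ and an element of $\ll_{m'+1}$.

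Now take a spanning bracket of $\ll_m$. By the definition of $\ll_m$ it is either of total weight strictly greater than $m$, in which case it lies in $\ll_{m+1}$ and there is nothing to do, or of total weight exactly $m$, say $\langle a_1,\ldots,a_l;y,z\rangle$ with $a_s\in\ll_{i_s}$, $y\in\ll_i$, $z\in\ll_j$ and $i_1+\cdots+i_l+i+j=m$ (a multioperator term $\Phi$ is treated identically). Because every bracket has at least two arguments, each single argument has weight strictly less than $m$, so the refined hypothesis lets me replace each argument by a component in $\mathcal{T}$ of the same weight plus a remainder of weight one higher. Expanding multilinearly, the term in which all arguments are their $\mathcal{T}$-components is a bracket of elements of $\mathcal{T}$, hence lies in $\mathcal{T}$ since $\mathcal{T}$ is closed under all brackets; every other term has at least one argument of strictly increased weight, so by the weight-additivity encoded in the definition of the $\ll_m$ it lands in $\ll_{m+1}$. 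Thus the bracket lies in $\mathcal{T}+\ll_{m+1}$, completing the induction.

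The two ingredients making this work are that $\mathcal{T}$, being a Sabinin subalgebra, is stable under the Mikheev-Sabinin brackets and the multioperator, and that substituting an argument from $\ll_w$ into a bracket lands the output in the appropriately shifted piece of the filtration. I expect the main obstacle to be precisely this second point: one must be confident that the filtration $\{\ll_m\}$ is genuinely compatible with composition of brackets (the $N$-sequence property), so that in the multilinear expansion the ``all-$\mathcal{T}$'' term really lies in $\mathcal{T}$ and every mixed term really gains a unit of weight. No individual estimate is difficult, but the bookkeeping of the nested inductions and the weight accounting is where care is needed.
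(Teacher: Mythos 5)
Your proof is correct, but it is organized differently from the paper's. The paper inducts on the nilpotency class: assuming $\ll_n\neq 0=\ll_{n+1}$, it applies the inductive hypothesis to the quotient $\ll/\ll_n$ (concluding that $\ll$ is generated by $\B_1$ together with $\ll_n$), and then uses the centrality of $\ll_n$ --- any basic bracket with an argument in $\ll_n$ has total weight at least $n+1$, hence vanishes --- to conclude, by essentially the multilinear expansion you use, that $\ll_n$ itself lies in the subalgebra generated by $\B_1$. You never form a quotient: you prove the successive-approximation statement $\ll_m\subseteq\mathcal{T}+\ll_{m+1}$ for every $m$ by strong induction on $m$, then collapse the tower using $\ll_{n+1}=0$. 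The paper's route is shorter but implicitly relies on $\ll/\ll_n$ being a nilpotent Sabinin algebra of smaller class whose filtration is $(\ll_i+\ll_n)/\ll_n$ and in which the image of $\B_1$ still spans a complement of the second filtration term; your route trades those (standard but unstated) facts for explicit weight bookkeeping inside $\ll$, which makes the argument slightly longer but more self-contained. One remark: your closing worry about compatibility of the filtration with \emph{composition} of brackets is unnecessary --- the mixed terms in your expansion are single Mikheev--Sabinin or multioperator terms evaluated on arguments lying in the various $\ll_i$ with total weight at least $m+1$, so they lie in $\ll_{m+1}$ directly by the definition of the filtration; no $N$-sequence property is invoked anywhere in your argument.
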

\begin{proof}
Assume that $\ll_n \neq 0 = \ll_{n+1}$ and let $\B_1$ be such a base. By induction we have that $\ll/\ll_n$ is generated by  $\{a + \ll_n \mid a \in \B_1\}$ so that $\ll$ is generated by $\B_1$ and $\ll_n$. Since $\ll_n$ is central we see that $\ll_n\subseteq \ll_2$ is contained in the subalgebra generated by $\B_1$. 
\end{proof}

\begin{lemma}
For a nilpotent Sabinin algebra $\ll$ 
\begin{enumerate}
\item $\PMlt(\ll)$ is a nilpotent Lie algebra;
\item $\PMlt_+(\ll)$ consists of locally nilpotent maps.
\end{enumerate}
\end{lemma}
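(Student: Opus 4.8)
The plan is to play the co-radical filtration of $U(\ll)$ against the filtration by the function $N$ from the proof of the Ado theorem. Write $U^{(k)}=\{z\in U(\ll)\mid N(z)\geq k\}$; by Lemma~\ref{thelemma} this is a decreasing multiplicative filtration, $U^{(k)}U^{(l)}\subseteq U^{(k+l)}$, and by the Corollary to that lemma $\ll\cap U^{(n+1)}=0$. I would first record two elementary estimates. First, every generator $L_{e(w)}$ or $R_{e(w)}$ of $\Mlt(\ll)$ with $w\in W(X)$ satisfies $N(e(w))\geq 1$, so by Lemma~\ref{thelemma} it carries $U^{(k)}$ into $U^{(k+1)}$; a product of $s\geq r$ such generators therefore carries $U^{(k)}$ into $U^{(k+r)}$, and hence every element of $\overline{\Mlt(\ll)}^{\,r}$ raises the $N$-filtration by at least $r$. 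In particular, since $\PMlt(\ll)\subseteq\overline{\Mlt(\ll)}$, every element of $\PMlt(\ll)$ raises the $N$-filtration by at least one. Second, a PBW monomial lying in $U(\ll)_i$ is a product of at most $i$ basis elements of $\ll$, each of $N$-value between $1$ and $n$ (an element of $\ll$ with $N$-value exceeding $n$ lies in $\ll_{n+1}=0$, as in the Corollary); hence its $N$-value is at most $in$, so that $U(\ll)_i\cap U^{(k)}=0$ whenever $k>in$.

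For part (2), fix $g\in\PMlt_+(\ll)$ and $z\in U(\ll)_i$; as $g(1)=0$ we may assume $z\in\overline{U(\ll)}$, so $N(z)\geq 1$. Because $\PMlt_+(\ll)$ preserves the co-radical filtration and raises the $N$-filtration by at least one, the element $g^m(z)$ lies in $U(\ll)_i\cap U^{(1+m)}$, which vanishes once $1+m>in$. Thus $g^{in}(z)=0$ and $g$ is locally nilpotent.

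For part (1), I would prove that the lower central series satisfies $\gamma_{n+1}(\PMlt(\ll))=0$. Since iterated commutators of elements of $\overline{\Mlt(\ll)}$ lie in its successive powers, $\gamma_r(\PMlt(\ll))\subseteq\overline{\Mlt(\ll)}^{\,r}$, so by the first estimate every element of $\gamma_{n+1}(\PMlt(\ll))$ raises the $N$-filtration by at least $n+1$. It then remains to show that an $h\in\PMlt(\ll)$ raising the $N$-filtration by at least $n+1$ is zero. Decomposing $h=L_x+g$ with $x\in\ll$ and $g\in\PMlt_+(\ll)$ along $\PMlt(\ll)=L_\ll\oplus\PMlt_+(\ll)$, evaluation at $1$ gives $x=h(1)\in\ll\cap U^{(n+1)}=0$, so $h=g$ is primitive. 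An induction on co-radical degree now finishes the argument: if $g$ annihilates $U(\ll)_{i-1}$, then for $z\in U(\ll)_i$ the coderivation identity $\overline{\Delta}(g(z))=(g\otimes 1+1\otimes g)\overline{\Delta}(z)$, combined with $\overline{\Delta}(z)\in\sum_{a+b=i,\,a,b\geq 1}U(\ll)_a\otimes U(\ll)_b$, shows that $g(z)$ is primitive; but $N(g(z))\geq N(z)+(n+1)\geq n+2$, so $g(z)\in\ll\cap U^{(n+1)}=0$. Hence $g=0$, and $\PMlt(\ll)$ is nilpotent of class at most $n$.

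The two filtration estimates are routine. The point demanding genuine care is the final one, namely upgrading ``raises $N$ by at least $n+1$'' to ``vanishes identically'' for elements of $\PMlt_+(\ll)$. One cannot argue on primitives alone, since a coderivation of a connected coalgebra need not be determined by its restriction to the primitives; what makes the argument work is precisely that $g$ simultaneously preserves the co-radical filtration and strictly raises the $N$-filtration, so that the co-radical induction forces each $g(z)$ to be both primitive and of impossibly high $N$-value.
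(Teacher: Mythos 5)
Your argument is correct, and your part (2) is essentially the paper's own proof (the paper leaves the bound $N\leq in$ on nonzero elements of the co-radical piece $U(\ll)_i$ implicit; you make it explicit). Part (1), however, diverges from the paper at the decisive step. Both proofs start from the same estimate: the $(n+1)$st term $\gamma_{n+1}$ of the lower central series of $\lie=\PMlt(\ll)$ lies in $\overline{\Mlt(\ll)}^{\,n+1}$, hence raises the $N$-filtration by at least $n+1$, so any $f\in\gamma_{n+1}$, being a primitive operator, satisfies $f(1)\in\ll\cap U^{(n+1)}=0$. The paper then upgrades ``$f$ kills $1$'' to ``$f=0$'' by a module-theoretic argument: $\gamma_{n+1}$ is a Lie ideal of $\lie$, so pushing $f$ past the operators $L_a$, $R_a$ ($a\in\ll$) that build the PBW monomials from $1$ produces only further elements of $\gamma_{n+1}$, each of which kills $1$; since $U(\ll)$ is generated by $1$ as an $\lie$-module, $f$ annihilates everything. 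You instead use the decomposition $\PMlt(\ll)=L_\ll\oplus\PMlt_+(\ll)$: evaluation at $1$ kills the $L_\ll$-component, and the remaining $\PMlt_+$-component is killed by induction along the co-radical filtration via the coderivation identity, each value $g(z)$ being forced to be a primitive element of $N$-value exceeding $n$, hence zero. Your route never invokes the ideal property of $\gamma_{n+1}$ and proves a slightly stronger rigidity statement --- any single primitive operator raising the $N$-filtration by more than $n$ vanishes identically --- but it imports the co-radical machinery (and its preservation by $\PMlt_+(\ll)$) into part (1), which the paper's shorter argument reserves for part (2) alone. Your closing remark is also apt: since a coderivation of a connected coalgebra is not determined by its restriction to primitives, the co-radical induction, playing the two filtrations against each other, is genuinely needed rather than a naive restriction-to-primitives argument.
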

\begin{proof}
Let $\lie= \PMlt(\ll)$, $\lie_+ = \PMlt_+(\ll)$ and let   $\lie_{n+1}$ be the subspace generated by all the products of at least  $n+1$ elements in $\lie$.  Observe that  $f \in \lie$ and $u \neq 0$ imply $N(f(u)) > N(u)$. Therefore, if  $\ll_{n+1} = 0$ 
the ideal $\lie_{n+1}$ annihilates any $\lie$-submodule of $U(\ll)$ that contains $1$, as $\lie_{n+1}(1) \subseteq \ll_{n+1} = 0.$
Since $U(\ll)$ is generated as an $\lie$-module by $1$ we conclude that  $\lie_{n+1} = 0$.

Part (2) is the consequence of the fact that $\lie_+$ preserves the co-radical filtration, but increases the value of $N$ on nontivial elements. In particular, applying a sufficient number of elements of $\lie_+$ one can annihilate any element of  $U(\ll)$.
\end{proof}

\begin{theorem}\label{thm:Lie}
For any nilpotent Sabinin subalgebra $\ll$ of a finite-dimensional algebra $A$ the subspace $\ll_2$ generates a nilpotent subalgebra of $A$.
\end{theorem}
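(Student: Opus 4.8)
The plan is to reduce everything to the multiplication Hopf algebra and then triangularize. By the universal property of the enveloping algebra, the inclusion $\ll\hookrightarrow\SU(A)$ extends to an algebra homomorphism $\phi\colon U(\ll)\to A$ which is the identity on $\ll$; since $\langle\ll_2\rangle_A=\phi\big(\langle\ll_2\rangle_{U(\ll)}\big)$ lives in $B:=\operatorname{im}\phi$, I may replace $A$ by the finite-dimensional algebra $B$. As $\ker\phi$ is an ideal, it is stable under all left and right multiplications, so $\Mlt(\ll)$ descends to the multiplication algebra $\Mlt(B)\subseteq\mathrm{End}(B)$, carrying $\PMlt(\ll)$ and $\PMlt_+(\ll)$ onto $\PMlt(B)$ and $\PMlt_+(B)$. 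By the preceding Lemma these images inherit the two crucial properties: $\PMlt(B)$ is a \emph{nilpotent} Lie algebra of operators, and $\PMlt_+(B)$ consists of \emph{nilpotent} operators (local nilpotency on $U(\ll)$ passes to the quotient and, on the finite-dimensional $B$, becomes honest nilpotency). Extending scalars to $\overline{\kk}$ and applying Lie's theorem, I would fix a complete flag $0=V_0\subset\cdots\subset V_d=B$ with respect to which all of $\PMlt(B)$ is upper triangular; then $\PMlt_+(B)$, being nilpotent operators, and $[\PMlt(B),\PMlt(B)]$ are strictly upper triangular, and in particular the associative algebra $\overline{\Mlt_+(B)}$ is nilpotent.

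Next I would show that for every $c\in\ll_2$ the operators $L_c$ and $R_c$ are \emph{strictly} upper triangular. Each weight $\lambda_i$ of the triangularization is a character of the nilpotent Lie algebra $\PMlt(B)$, so $\lambda_i$ vanishes on $[\PMlt(B),\PMlt(B)]$ and, because every element of $\PMlt_+(B)$ is strictly triangular, also on $\PMlt_+(B)$. Thus it suffices to check that $L_c,R_c\in[\PMlt(B),\PMlt(B)]+\PMlt_+(B)$ for $c$ a bracket of weight $\geq 2$. This is exactly the content of Proposition~\ref{coincidence}: writing $\pi=\mathrm{Id}-\pi^L_+$ for the projection onto $L_\ll$ along $\PMlt_+(B)$, one has $L_{\langle y,z\rangle}=-\pi([L_y,L_z])=-[L_y,L_z]+\pi^L_+([L_y,L_z])$, the first term lying in the derived algebra and the second in $\PMlt_+(B)$; the induction of Proposition~\ref{coincidence} then realizes every higher Mikheev--Sabinin bracket as a projection of an iterated Lie commutator modulo $\PMlt_+(B)$, and the multioperator is built directly from the Shestakov--Umirbaev operations, i.e.\ from $\PMlt_+(B)$. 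Hence each $\lambda_i$ annihilates $L_{\ll_2}$ and $R_{\ll_2}$, so these operators are strictly upper triangular, and every product of them lies in the nilpotent ideal of strictly upper triangular maps.

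It remains to pass from single factors to the whole generated subalgebra, i.e.\ to conclude that the multiplication algebra $M\big(\langle\ll_2\rangle\big)$ is nilpotent, for then $\langle\ll_2\rangle^{(k)}\subseteq M\big(\langle\ll_2\rangle\big)^{k-1}(\ll_2)$ vanishes once $k>d$. For a left- or right-normed product the previous paragraph already suffices, since one peels off one $\ll_2$-factor at a time and each step lowers the flag. \textbf{The hard part is the general bracketing}, where the outermost node multiplies two subproducts $u,v\in\langle\ll_2\rangle$ that are themselves no longer in $\ll_2$, so that the triangularity of $L_u$ or $R_u$ is not yet known. I would handle this by induction on the number of factors, controlling the associativity defect through the defining identity~(\ref{defShU}): as operators, $L_{uv}=L_uL_v+A_{u,v}$ where $A_{u,v}(c)=(uv)c-u(vc)=\sum (u_{(1)}v_{(1)})\,p(u_{(2)};v_{(2)};c)$ is driven by the Shestakov--Umirbaev maps $p(\,\cdot\,;\,\cdot\,;-)\in\overline{\Mlt_+(B)}$. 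The term $L_uL_v$ is a product of two strictly triangular maps and hence strictly triangular, and the remaining task is to absorb the factors $L_{u_{(1)}v_{(1)}}$ into the strictly triangular ideal. Making this absorption uniform — equivalently, proving that $L_w,R_w$ are strictly upper triangular for \emph{all} $w\in\langle\ll_2\rangle$, not merely for $w\in\ll_2$ — is the step I expect to require the most care, and it is exactly where the finite-dimensionality of $A$ (through the nilpotency of $\overline{\Mlt_+(B)}$) is indispensable: over an infinite-dimensional $A$, such as a free associative algebra, $\overline{\Mlt_+}$ is only locally nilpotent and the conclusion genuinely fails.
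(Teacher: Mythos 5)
Your first half is essentially sound, and it runs parallel to the paper's own preparation: you reduce to $B=U(\ll)/I$, invoke the Lemma to get that $\PMlt(B)$ is a nilpotent Lie algebra and $\PMlt_+(B)$ consists of nilpotent operators, and then show that $L_c$ (and $R_c$) is strictly upper triangular for $c\in\ll_2$ by writing $L_c\in[\PMlt(B),\PMlt(B)]+\PMlt_+(B)$; this is a legitimate globalization of the paper's observation that $L_c=f-f_+$ with $f\in[\lie(A),\lie(A)]$ nilpotent and $f_+\in\lie_+(A)$. The genuine gap is exactly the step you flag yourself: passing from the generators $\ll_2$ to the subalgebra they generate. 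As written, the absorption argument fails, because the strictly upper triangular operators form a two-sided ideal only inside the algebra of operators that are upper triangular \emph{with respect to the same flag}, not inside $\mathrm{End}(B)$. Your Lie's theorem step triangularizes only $\PMlt(B)$ (hence $[\PMlt(B),\PMlt(B)]$ and $\PMlt_+(B)$), whereas the correction terms in (\ref{defShU}) involve $L_{u_{(1)}v_{(1)}}$, multiplications by arbitrary elements of $B$, about which your flag says nothing; a composition of an arbitrary operator with a strictly triangular one need not even be nilpotent (in $2\times 2$ matrices, $(E_{12}+E_{21})E_{12}=E_{22}$). So the inductive step ``$L_u,L_v$ strictly triangular $\Rightarrow L_{uv}$ strictly triangular'' does not close, and with it the whole second half of the argument remains a program rather than a proof.

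The missing ingredient that would make your route work is available in the paper's framework but never invoked by you: $\Mlt\{X\}$ is a cocommutative pointed irreducible Hopf algebra in characteristic zero, hence is generated as an associative algebra by its primitive elements, so $\Mlt(B)$ is generated by the nilpotent Lie algebra $\PMlt(B)$ and Lie's theorem triangularizes \emph{all} of $\Mlt(B)$ with respect to one flag. Then $T=\{\text{strictly triangular elements of } \Mlt(B)\}$ really is an ideal of $\Mlt(B)$ containing the image of $\overline{\Mlt_+}$, the map $b\mapsto L_b \bmod T$ becomes an algebra homomorphism into a commutative algebra by (\ref{defShU}) (extended from left-normed products to all of $B$ by PBW and linearity), and hence kills every bracketed product of elements of $\ll_2$; one still has to treat $R_b$ separately (the identity (\ref{defShU}) is left-biased, so you need its mirror in the opposite Hopf algebra, using that $\PMlt_+$ is left-right symmetric as $\{f\in\PMlt \mid f(1)=0\}$), and then conclude nilpotency of the subalgebra from nilpotency of its multiplication operators by a tree-depth induction. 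The paper sidesteps all of this machinery: it applies the Lie-theoretic input only to a single central element $c\in\ll_n$ (getting $c^m\in I$), and then handles arbitrary bracketings by an induction on $\dim\ll$ combined with a Poincar\'e--Birkhoff--Witt reordering argument using two opposite orderings of the basis. So your approach is salvageable and genuinely different, but the step you defer is precisely the one that requires the new idea, and without it the theorem is not proved.
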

\begin{proof}

Assume that  $\ll_n \neq 0 = \ll_{n+1}$ with $n \geq 2$. Without loss of generality we can also assume that $A = U(\ll)/I$ for a certain ideal $I$ of finite codimension. Denote by $\lie(A), \lie_+(A)$ the Lie algebras defined in the same fashion as 
$\lie=\PMlt(\ll)$ and $\lie_+=\PMlt_+(\ll)$ but taking instead of $e\colon \kk\{X\} \rightarrow U(\ll)$ the composition of $e$ with the projection of $U(\ll)$ to $A$. Clearly, $\lie(A)$ and $\lie_+(A)$ are homomorphic images of  $\lie$ and $\lie_+$, respectively. Since $A$ is of finite dimension, $\lie_+(A)$ consists of nilpotent transformations of $A$.

Let us prove by induction on  the dimension of $\ll$ that there exists a natural number $m$ such that $\ll^m_2 \subseteq I$. Choose a non-zero element $c \in \ll_n$; it lies in the center of $A$. As $n\geq 2$, we have  
$$c = f_1(a_1)+\cdots + f_r(a_r)$$
with $a_1,\dots, a_r \in \ll$ and $f_1,\dots, f_r \in \l_+(A)$. In particular, $c = f(1)$ with $$f=[f_1,L_{a_1}] + \cdots + [f_r,L_{a_r}] \in [\lie(A),\lie(A)].$$ As $\lie(A)$ is a nilpotent Lie algebra, $f$ is nilpotent. We can write $f= f_+ + L_{c}$ where $f_+ \in \lie(A)_+$. Since $L_{c}$ lies in the centre of $\lie(A)$ we have $[f,f_+] = 0$ and, hence,  as $f$ and $f_+$ are nilpotent $L_{c}$ also is.   This proves that $c^m \in I\subset U(\ll)$ for $m$ sufficiently large.

Choose a basis  $\B = \B_1 \sqcup \cdots \sqcup \B_n$ as in Section~\ref{here-we-define-B}. By the induction assumption applied to $\ll/\kk c$ we see that there exists  $m$  such that $\ll^{m}_2 \subseteq I +  U(\ll)c$. Let $$U_i =U(\ll)_i = \kk 1 + \ll^1 + \cdots + \ll^i.$$ There exists a natural number $d_1$ such that $\ll^{m}_2 \subseteq I + U_{d_1}c$. If we show that for any natural number $d_i$ there exist $d_{i+1}$ and $e_{i+1}$ such that  $$((U_{d_i}\ll_2)\cdots)\ll_2 \subseteq I + U_{d_{i+1}}c$$ ($e_{i+1}$ copies of $\ll_2$) then, as $c$ is central, the product $((\ll_2 \ll_2)\cdots)\ll_2$ ($d = m+e_2+\cdots +e_m$ factors) is contained in $I + U_{d_m}c^m \subseteq I$. Any element of $\ll^{nd}_2$ can be expressed in terms of the Poincar\'e-Birkhoff-Witt basis as a linear combination of monomials with at least $d$ factors, all of them in $\ll_2$, and, hence, $\ll^{nd}_2 \subseteq I$. Taking  $m=nd$ one would prove that $\ll^m_2 \subseteq I$.

Consider the basis $\B' = \B$ with the opposite order. Each element $w$ of the basis of $U(\ll)$ associated with $\B$ that has  $d_i$ elements in $\B_1$ and at least  $e_{i+1} = n(d_i + m)$ in $\B_2 \sqcup \cdots \sqcup \B_n$ can be expressed as a linear combination with non-zero coefficients of the elements $w'_1,\dots, w'_l$ of the basis of $U(\ll)$ associated with $\B'$. The number of factors in $w'_j$ is at least $d_i+m$, of which at most  $d_i$ belong to $\B_1$. Therefore, at least $m$ factors lie in  $\B_2 \sqcup \cdots \sqcup \B_n$. Given the choice of the order on $\B'$ we have that  $w'_j \in I + U(\ll)c$. As a consequence,  $((U_{d_i}\ll_2)\cdots)\ll_2 \subseteq I + U(\ll)c$, where $\ll_2$ appears $e_{i+1}$ times. Since $((U_{d_i}\ll_2)\cdots)\ll_2$ is finite-dimensional, we can choose an adecuate $d_{i+1}$.
\end{proof}

\begin{remark} Guided by the analogy with Lie algebras, one may conjecture that there is a version of Theorem~ \ref{thm:Lie} for {\em solvable} Sabinin algebras. However, we do not yet have an adecuate definition of a solvable Sabinin algebra. The following example illustrates that a naive approach to the notion of solvability does not produce desired results.

Consider the unital algebra $A$ spanned by $1,a,b$ with the products given by 
\begin{displaymath}
aa = 0,\quad ab = b, \quad ba = 0 \mbox{ y } bb = b.
\end{displaymath}
The subspace $\kk b$ is an ideal of $A$ and we have
\begin{displaymath}
(b,a,b) = (ba)b-b(ab) = -b.
\end{displaymath}
The subspace $\ll = \kk a + \kk b$ is a Sabinin subalgebra of $A$; however, the subalgebra generated by $\ll_2 = \kk b$ is not nilpotent. Nevertheless, any Sabinin algebra operation vanishes on $\ll_2$. Also, the Lie algebra $\PMlt(\ll)$ is solvable since the multiplication algebra of $A$ has this property.
\end{remark}

\section{Further comments}
\subsection{The Ado theorem for nilpotent loops}
The Ado theorem for Lie algebras is a statement about Lie algebra representations. There are several ways to think of representations of Sabinin algebras; thus, in principle, the Ado theorem may generalize (or fail to do so) in more than one way.

Recall that the Ado theorem for groups claims that any finite-dimensional Lie group can be locally embedded into the group of units of a finite-dimensional algebra. In contrast to the version for the Lie algebras, there is no ambiguity about the non-associative generalization of this statement: an analytic loop satisfies the Ado theorem if it can be locally embedded into the local loop of invertible elements in a non-associative algebra. (Note that invertible elements of a non-associative algebra do not always form a globally defined loop, a counterexample being the Cayley-Dickson algebra $\mathbb{A}_4$.) In the case of nilpotent loops the following global version of the Ado theorem holds:

\begin{theorem}\label{thm:ado2} Let $L$ be a torsion-free nilpotent loop of class $n$. Then the composition
$$L \hookrightarrow \kk [L] \to \kk [L]/ \overline{\kk [L]}\,^{n+1}$$
is injective. Here $ \overline{\kk [L]}$ is the augmentation ideal of $\kk [L] $.
\end{theorem}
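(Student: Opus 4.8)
\textbf{The plan is to mimic the associative Jennings--Quillen argument through the group algebra filtration, using the non-associative Jennings theorem cited as \cite{M2}.} The statement is the exact loop-theoretic analogue of Theorem~\ref{thm:ado}: there, nilpotency of the Sabinin algebra $\ll$ of class $n$ forces $\overline{U(\ll)}^{\,n+1}\cap\ll=0$; here, nilpotency of the loop $L$ of class $n$ should force the analogous intersection to be trivial at the level of the loop algebra $\kk[L]$. The key bridge is the filtration of $\kk[L]$ by powers of the augmentation ideal $\overline{\kk[L]}$ and the identification, provided by the non-associative Jennings theorem, of the associated graded object with the universal envelope of the graded Sabinin algebra attached to the commutator-associator filtration of $L$.

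\textbf{First I would set up the dimension (filtration) subloops and their associated graded Sabinin algebra.} For each $i\geq 1$ define $D_i=D_i(L)$ to be the $i$th term of the commutator-associator filtration of $L$, so that $D_1=L\supseteq D_2\supseteq\cdots$ with $D_{n+1}=\{e\}$ by the hypothesis that $L$ is nilpotent of class $n$. The associated graded object $\mathrm{gr}(L)=\bigoplus_i D_i/D_{i+1}$ carries the structure of a graded Sabinin algebra, and by the non-associative Jennings theorem of \cite{M2} the $\overline{\kk[L]}$-adic filtration on $\kk[L]$ has associated graded algebra isomorphic to the universal enveloping algebra $U(\mathrm{gr}(L)\otimes\kk)$, suitably interpreted over $\kk$. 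The torsion-freeness hypothesis is what guarantees that the quotients $D_i/D_{i+1}$ embed into their rationalizations and hence that the natural map $L\to\mathrm{gr}(L)$ detects nontriviality: an element of $L\setminus\{e\}$ lies in some $D_i\setminus D_{i+1}$ with $i\leq n$ and thus has nonzero image in degree $i$ of $\mathrm{gr}(L)$.

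\textbf{Next I would track the weight filtration through the quotient map and conclude by a graded comparison.} Writing $g-1$ for $g\in L$, the element $g-1$ lies in $\overline{\kk[L]}^{\,i}$ precisely when $g\in D_i$, and its leading term in $\overline{\kk[L]}^{\,i}/\overline{\kk[L]}^{\,i+1}\cong U_i/U_{i+1}$ is exactly the image of the class of $g$ in $D_i/D_{i+1}\subseteq\mathrm{gr}(L)$ under the Jennings isomorphism. For $g\neq e$ of filtration degree $i\leq n$ this leading term is nonzero in the associated graded of $\kk[L]$, hence $g-1\notin\overline{\kk[L]}^{\,n+1}$, which is to say $g-1$ has nonzero image in $\kk[L]/\overline{\kk[L]}^{\,n+1}$. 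Since distinct loop elements $g,h$ give $g-h=(g-1)-(h-1)$ and one reduces the separation of $g$ from $h$ to the nonvanishing of the image of $gh^{-1}-1$ (or the appropriate loop division $g/h$, which also lies in $D_i$ for a suitable $i\leq n$ when $g\neq h$), injectivity of the composition follows.

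\textbf{The hard part will be} the precise interface with the non-associative Jennings theorem: unlike the group case, one must verify that the leading-term map $D_i/D_{i+1}\to\overline{\kk[L]}^{\,i}/\overline{\kk[L]}^{\,i+1}$ is injective rather than merely defined, and that torsion-freeness of the loop (as opposed to just of the associated graded pieces) is enough to secure this injectivity over an arbitrary field $\kk$, or whether one must rationalize and argue that the embedding into $\kk[L]/\overline{\kk[L]}^{\,n+1}$ survives. In the associative prototype \cite[Proposition 3.6]{Quillen} this is handled by reducing to the graded Lie algebra and its restricted envelope; I expect the non-associative version to require the full strength of the Sabinin-algebra PBW theorem applied to $\mathrm{gr}(L)$, together with the fact, established in \cite{M2}, that for a nilpotent loop the commutator-associator filtration terminates and its graded Sabinin algebra is finitely generated in each degree.
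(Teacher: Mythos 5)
Your overall strategy---routing the statement through the non-associative Jennings theorem of \cite{M2}---is the same as the paper's, but your execution inserts a false lemma at the decisive point. You take $D_i$ to be the commutator-associator filtration and then claim (a) that $g-1\in\overline{\kk[L]}\,^i$ \emph{precisely} when $g\in D_i$, and (b) that torsion-freeness of $L$ guarantees that the quotients $D_i/D_{i+1}$ embed into their rationalizations, so that a nontrivial $g\in D_i\setminus D_{i+1}$ with $i\leq n$ has nonzero leading term. Both claims fail already for groups (loops with trivial associators), where the commutator-associator filtration is the lower central series: the group $G=\langle a,b,c \mid [a,b]=c^p,\ [a,c]=[b,c]=1\rangle$ is torsion-free nilpotent of class $2$, yet the class of $c$ in $G/\gamma_2 G$ is a nonzero $p$-torsion element, and over a field $\kk$ of characteristic zero one has $p(c-1)\equiv c^p-1=[a,b]-1\equiv 0 \pmod{\overline{\kk[G]}\,^2}$, so $c-1\in\overline{\kk[G]}\,^2$ although $c\notin\gamma_2 G$. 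Thus the leading-term map $D_i/D_{i+1}\to\overline{\kk[L]}\,^i/\overline{\kk[L]}\,^{i+1}$ on which your graded comparison rests is not injective, and the ``hard part'' you flag is not a technical verification but exactly where the argument breaks. (Your Quillen-style isomorphism $\mathrm{gr}\,\kk[L]\cong U(\mathrm{gr}(L)\otimes\kk)$ is fine, but it is consistent with this failure precisely because torsion in $D_i/D_{i+1}$ dies after tensoring with $\kk$.)

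What the Jennings theorem of \cite{M2} actually provides---and all that the paper's proof uses---is that the dimension subloops $\{g\in L \mid g-1\in\overline{\kk[L]}\,^i\}$ coincide with the \emph{isolators} of the commutator-associator filtration, not with that filtration itself. For $L$ torsion-free and nilpotent of class $n$, the isolator of $D_{n+1}=\{1\}$ is the set of torsion elements, hence trivial; so $g-1\in\overline{\kk[L]}\,^{n+1}$ forces $g=1$, with no graded or PBW argument needed beyond this citation. The paper then finishes with the same division trick you sketch, but using loop division (an inverse $h^{-1}$ need not exist in a loop): if $g_1-g_2\in\overline{\kk[L]}\,^{n+1}$ then $1-g_1\backslash g_2\in\overline{\kk[L]}\,^{n+1}$, whence $g_1\backslash g_2=1$ and $g_1=g_2$. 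If you replace your commutator-associator filtration by the dimension filtration throughout, your leading-term discussion becomes correct but redundant, collapsing to this two-line proof. Finally, on your question about arbitrary fields: characteristic zero is genuinely needed, since over $\mathbb{F}_p$ the statement already fails for $L=\mathbb{Z}$ and $n=1$, as $x^p-1=(x-1)^p\in\overline{\mathbb{F}_p[\mathbb{Z}]}\,^2$.
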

The algebra $\kk [L]/ \overline{\kk [L]}\,^{n+1}$ is finite-dimensional whenever the abelianization of $L$ is of finite rank.
Its invertible elements  form a loop, not just a local loop. 
\begin{proof}
If $L$ is torsion-free nilpotent loop of class $n$, the Jennings theorem \cite{M2} implies that the dimension subloop $D_{n+1}L$ is trivial. In other words, the only element $g\in L$ such that $g-1\in \overline{\kk [L]}\,^{n+1}$ is $g=1$. If $g_1-g_2\in \overline{\kk [L]}\,^{n+1}$, we have that  $1-g_1\backslash g_2 \in \overline{\kk [L]}\,^{n+1}$ too, and, hence, $g_1=g_2$.
\end{proof}

We should point out that the proof of Theorem~\ref{thm:ado} is the linearized version of the argument which establishes the non-associative generalization of the Jennings theorem \cite{M2}. 

\begin{lemma}\label{lem:csc-tf}A  connected and simply-connected nilpotent loop is torsion-free. 
\end{lemma}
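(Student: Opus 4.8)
The plan is to use the integration theory developed earlier in the paper, which gives a tight correspondence between nilpotent Sabinin algebras of class $n$ and simply-connected nilpotent loops of class $n$. By the equivalence of categories established via the Baker-Campbell-Hausdorff formula, a connected and simply-connected nilpotent loop $L$ of class $n$ is isomorphic to the loop $B(\ll)$ obtained by integrating its tangent Sabinin algebra $\ll$, which is nilpotent of the same class $n$. The underlying set of $B(\ll)$ is the vector space $\ll$ itself (over $\R$), with the product given by a primitive series $F(x,y)=x+y+\text{(brackets involving both }x\text{ and }y)$. Thus it suffices to show that this loop structure on a vector space is torsion-free.

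First I would observe that torsion-freeness is an infinitesimal-looking but actually global statement, so the natural tool is the $N$-sequence / filtration structure produced by Theorem~\ref{the}. Write $L=\ll$ with product $F$, and recall that the filtration $\ll=\ll_1\supseteq\ll_2\supseteq\cdots\supseteq\ll_{n+1}=0$ from the definition of nilpotency is an $N$-sequence with $\bigcap_i\ll_i=0$. For an element $x\neq 0$, let $k$ be the largest index with $x\in\ll_k$ (so $x\notin\ll_{k+1}$); such $k$ exists and is at most $n$ because the intersection is zero. I would then analyze the $m$-fold power $x^m$ (with some fixed parenthesization, say left-normed) modulo $\ll_{k+1}$.

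The key computation is that, modulo $\ll_{k+1}$, the product $F$ reduces to ordinary vector-space addition: since $F(x,y)=x+y+H(x,y)$ where every monomial in $H$ has weight strictly greater than the sum of the weights of its lower-order arguments and involves both variables, any bracket term lands in a higher piece of the filtration. Concretely, if $x\in\ll_k$, then $H(x,x)\in\ll_{k+1}$ and more generally all correction terms in forming powers of $x$ lie in $\ll_{k+1}$. Hence $x^m\equiv m\,x \pmod{\ll_{k+1}}$ in the quotient vector space $\ll_k/\ll_{k+1}$. Since we are working over a field of characteristic zero and $x\notin\ll_{k+1}$, the class of $x$ in $\ll_k/\ll_{k+1}$ is nonzero, so $m\,x\not\equiv 0$ for any $m\geq 1$. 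Therefore $x^m\neq 0$ (that is, $x^m\neq e=0$) for all $m\geq 1$, which is exactly torsion-freeness.

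The main obstacle I anticipate is handling the bookkeeping of parenthesizations: in a non-associative loop there are many distinct $m$-fold products of $x$ with itself, and ``torsion-free'' must be interpreted carefully (for a nilpotent loop one typically uses that powers $x^m$ are unambiguous, or works with the associated Mal'cev-type coordinates). The cleanest route is to note that the filtration quotients $\ll_k/\ll_{k+1}$ are abelian groups on which the induced operation genuinely is addition, so every reasonable $m$-fold product collapses to $m\,x$ modulo $\ll_{k+1}$ regardless of association; any ambiguity in the product lives entirely in $\ll_{k+1}$ and is killed in the quotient. Establishing rigorously that all parenthesizations of the $m$-th power agree modulo the next filtration step---and that the loop is \emph{power-associative enough} for $x^m$ to be well-defined---is where the real care is needed, but it follows from the generalized-monomial structure of $F$ together with the defining property of an $N$-sequence.
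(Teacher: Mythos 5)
Your proof is correct, but it takes a genuinely different route from the paper's. The paper argues by contradiction on a counterexample of minimal dimension: it picks a one-parameter subgroup $S$ in the centre of $L$ (which integrates a central subalgebra of the tangent Sabinin algebra, hence is a copy of $\R$ and torsion-free), notes that a non-trivial torsion element $x$ cannot lie in $S$, and passes to $L/S$, which is again connected, simply-connected and nilpotent of smaller dimension, where the image of $x$ would be a non-trivial torsion element --- a contradiction. You instead transport the whole problem into Baker--Campbell--Hausdorff coordinates using the integration results of Section~3 (uniqueness of the simply-connected integration gives $L\cong B(\ll)$ with product $F(x,y)=x+y+H(x,y)$) and then run a filtration computation: if $x\in\ll_k\setminus\ll_{k+1}$, every parenthesized $m$-th power of $x$ is congruent to $mx$ modulo $\ll_{k+1}$, hence non-zero since we are over $\R$. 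The parenthesization issue you flag at the end is dispatched by a short induction: any parenthesized power splits as $F(u,v)$ with $u\equiv ax$, $v\equiv bx \pmod{\ll_{k+1}}$ and $u,v\in\ll_k$, while $H(u,v)\in\ll_{2k}\subseteq\ll_{k+1}$ because every monomial of $H$ has degree at least two and respects the filtration; so the power is $\equiv (a+b)x$. As for what each approach buys: the paper's proof is shorter and softer, but silently relies on quotient-loop theory (central subloops are normal, $L/S$ is again simply connected and nilpotent) and on integrating subalgebras; yours leans on the heavier BCH machinery --- legitimately, since those theorems precede the lemma in the paper, so there is no circularity --- and in exchange yields a sharper conclusion: all parenthesized powers of a non-zero element are pairwise distinct, so torsion-freeness holds under any reasonable definition of torsion for loops, with no quotient constructions needed. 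Both proofs equally depend on the standing assumption (the remark in Section~3 that nilpotent loops are analytic) that $L$ has a tangent Sabinin algebra at all.
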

\begin{proof}
Assume $x\in L$ is a non-trivial torsion element and $L$ is connected, simply-connected and nilpotent, of smallest dimension possible. Let $S$ be a uniparametric subgroup in the centre of $L$; it is also torsion-free and simply-connected as it integrates a subalgebra of the Sabinin algebra of $L$. Since $x\notin S$, its image in $L/S$ is a non-trivial torsion element. However, $L/S$ is nilpotent and simply-connected and, hence, must be torsion-free. 
\end{proof}

\begin{theorem}
A connected analytic loop $L$ is nilpotent and simply-connected if and only if there exists a finite-dimensional algebra $A = \kk 1 \oplus I$, with $I$ a nilpotent ideal, and an injective homomorphism of analytic loops
$$\varphi \colon L \rightarrow 1 + I,$$ 
whose differential  at the unit is also injective.
\end{theorem}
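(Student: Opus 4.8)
The statement is an equivalence, so I would prove the two implications separately, using the analytic Ado theorem (Theorem~\ref{thm:ado}) together with the integration machinery of Section~3. For the direct implication, suppose $L$ is connected, simply-connected and nilpotent of class $n$, and let $\ll$ be its tangent Sabinin algebra, a finite-dimensional nilpotent Sabinin algebra of class $n$; by the integration equivalence $L\cong B(\ll)$. Theorem~\ref{thm:ado} provides the finite-dimensional algebra $A=U(\ll)/\overline{U(\ll)}^{n+1}$ together with an injection $\iota\colon\ll\hookrightarrow A$. Writing $I=\overline{U(\ll)}/\overline{U(\ll)}^{n+1}$ gives $A=\kk 1\oplus I$ with $I^{n+1}=0$, so $I$ is a nilpotent ideal, and $\iota(\ll)\subseteq I$ since primitive elements lie in the augmentation ideal.

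The plan is then to integrate $\iota$. The set $1+I$ is an affine subspace of $A$, hence a contractible manifold; since $I$ is nilpotent, Theorem~\ref{the} (applied to the filtration of $1+I$ by the powers of $I$) shows that $1+I$ is a nilpotent simply-connected analytic loop whose tangent Sabinin algebra is $\SU(I)$. By uniqueness of integration, $1+I\cong B(\SU(I))$, and the exponential map $x\mapsto 1+x+x^2/2!+\cdots$, a polynomial diffeomorphism $I\to 1+I$ with polynomial inverse $\log$, realizes this isomorphism. Now $\iota$ is an injective homomorphism of Sabinin algebras $\ll\to\SU(I)$, so by functoriality of the BCH construction it integrates to an analytic loop homomorphism $\varphi=B(\iota)\colon L=B(\ll)\to B(\SU(I))=1+I$. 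On underlying spaces $\varphi$ agrees with $\exp_{1+I}\circ\,\iota\circ\log_L$, so the injectivity of $\iota$ forces $\varphi$ to be injective; moreover $d\varphi=\iota$ at the unit, which is injective. This yields the required embedding.

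For the converse, assume we are given $A=\kk 1\oplus I$ with $I$ a finite-dimensional nilpotent ideal and an injective analytic homomorphism $\varphi\colon L\to 1+I$ with injective differential. By Theorem~\ref{the} the loop $1+I$ is nilpotent, and since $\varphi$ is an injective homomorphism its image $\varphi(L)$ is a subloop isomorphic to $L$; as the brackets of a subloop are restrictions of the ambient brackets, $L$ inherits nilpotency. It remains to prove that $L$ is simply-connected. Here I would pass to the universal cover $p\colon\tilde L\to L$, which is a connected, simply-connected nilpotent analytic loop with the same tangent Sabinin algebra $\ll$ as $L$, so $\tilde L\cong B(\ll)$. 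The composite $\varphi\circ p\colon\tilde L\to 1+I$ has injective differential $d\varphi$ at the unit. Using that the exponential map of a simply-connected nilpotent loop is a global diffeomorphism and that loop homomorphisms intertwine exponential maps (both products sharing their canonical connection), one gets $\varphi\circ p=\exp_{1+I}\circ\,d\varphi\circ\log_{\tilde L}$; since $\exp_{1+I}$ and $\log_{\tilde L}$ are bijective and $d\varphi$ is injective, $\varphi\circ p$ is injective. Consequently $p$ is injective, and an injective covering of a connected space is a homeomorphism, so $L\cong\tilde L$ is simply-connected.

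The main obstacle is the simply-connectedness in the converse direction: the rest reduces to the bookkeeping of the Ado theorem and the functoriality of integration, but deducing simple connectivity from an injective immersion into a simply-connected loop genuinely requires the identity $\psi\circ\exp=\exp\circ\,d\psi$ for analytic loop homomorphisms. This in turn rests on a loop homomorphism being affine for the canonical connections and on the exponential map being a diffeomorphism for nilpotent simply-connected loops --- the two geometric inputs that make the \emph{injective differential implies injective} principle valid in the non-associative setting.
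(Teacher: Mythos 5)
Your proposal is correct, but your forward implication takes a genuinely different route from the paper's, so let me compare. Your converse is essentially the paper's own argument: the paper also works with the simply-connected integration, except that instead of invoking a universal cover it takes the covering homomorphism $\pi\colon B(\ll)\to L$ directly from the uniqueness-of-integration proposition of Section~\ref{here-we-define-B} (this sidesteps your unproved assertion that the universal cover of an analytic loop carries a loop structure --- harmless, since you immediately identify $\tilde L$ with $B(\ll)$ anyway), and it concludes from the same commuting exponential diagram that $\varphi(L)=\exp(\varphi'(\ll))$, hence that $L$ is homeomorphic to $\ll$ and simply-connected, with nilpotency inherited from $1+I$ exactly as in your argument. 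For the direct implication, however, the paper does not linearize at all: it shows that $L$ is torsion-free (Lemma~\ref{lem:csc-tf}) and embeds the abstract loop $L$ into $\kk[L]/\overline{\kk[L]}\,^{n+1}$ by the Jennings-type dimension-subloop theorem (Theorem~\ref{thm:ado2}), with no use of primitive operations or integration; you instead apply the linear Ado theorem (Theorem~\ref{thm:ado}) to $\ll$, identify $1+I$ with $B(\SU(I))$ via $\exp$, and integrate the Sabinin-algebra embedding $\iota\colon\ll\to\SU(I)$ by BCH functoriality to get $\varphi=\exp\circ\iota$ on $L\cong B(\ll)$. Each route buys something. The paper's is shorter and ``non-linear'', but as written it leaves real bookkeeping to the reader: Theorem~\ref{thm:ado2} produces a finite-dimensional algebra only when the abelianization of $L$ has finite rank, which is not automatic for a connected analytic loop (as an abstract abelian group $(\R,+)$ has infinite rank), and it does not by itself address the analyticity of the embedding or the injectivity of its differential. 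Your construction settles all three required properties explicitly: $A=U(\ll)/\overline{U(\ll)}\,^{n+1}$ is finite-dimensional by the Poincar\'e--Birkhoff--Witt theorem, $\varphi$ is polynomial and hence analytic, and $d\varphi=\iota$ is injective by construction. So your longer forward argument is not wasted effort; it supplies precisely the details the paper's one-line appeal to Theorem~\ref{thm:ado2} leaves implicit.
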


\begin{proof}
If $L$ is nilpotent and simply-connected, it is torsion-free by Lemma~\ref{lem:csc-tf}. The existence of $A$ and $\varphi$ follows then from Theorem~\ref{thm:ado2}.

The converse can be proved in the same manner as for groups. First, observe that $1+ I$ is an analytic loop whose Sabinin algebra coincides with the Shestakov-Umirbaev Sabinin algebra  $\SU(I)$ of $I$ (see \cite{MP3}) and such that $\exp \colon I \rightarrow 1+I$ is a diffeomorphism. Let $\ll$ be the Sabinin algebra of $L$. The loop homomorphism  $\varphi$ induces an injective Sabinin algebra homomorphism $\varphi'\colon \ll \rightarrow \SU(I)$. Write $B(\ll)$ for the simply-connected loop with Sabinin algebra $\ll$; as $L$ is connected, there is a surjective homomorphism $\pi\colon B(\ll) \rightarrow L$ whose differential at the unit is the isomorphism $\pi'$ of the respective Sabinin algebras. Since the diagram
\begin{displaymath}
\begin{CD}
B(\ll) @>\pi>> L @>\varphi>> 1 + I \\
@A{\exp}AA @A\exp AA @A\exp AA \\
\ll @>\pi'>> \ll @>\varphi'>> I
\end{CD}
\end{displaymath}
commutes, we have $\varphi(L) = \exp(\varphi'(\ll))$ so that  $L$ is homeomorphic to $\ll$ and, hence, is simply-connected. The loop  $1+I$ is nilpotent and $\varphi$ is injective; as a consequence,  $L$ is nilpotent too.
\end{proof}

\subsection{Complete Hopf algebras} The equivalence between the categories of formal Lie groups and Lie algebras can be realized in two steps. One associates to a formal Lie group the Hopf algebra of distributions on it; then, the  primitive elements in this Hopf algebra form a Lie algebra. In the case of nilpotent groups, the central role of Hopf algebras is even more transparent, see \cite[Appendix]{Quillen}. In brief, they are the conceptual tool behind the Baker-Campbell-Hausdorff formula. 

The theory of Hopf algebras remains the same in the non-associative context, see \cite{MP3, MPSh, PI2}. In particular, one may define complete Hopf algebras and retrace the steps of \cite{Quillen}. In fact, we have done part of this task here and the rest presents no challenge. One important point which we should mention briefly is the fact that complete Hopf algebras can be used to define the Malcev completion of a loop. Let $\widehat{\Q}[L]$ be the completion of the rational loop algebra of a loop $L$ with respect to the augmentation ideal. The Malcev completion of $L$ can be defined as the loop of the group-like elements in $\widehat{\Q}[L]$. The details are completely similar to the case of groups.

\end{document}